\documentclass[11pt,regno]{amsart}




\usepackage{overpic}

\usepackage{euscript,graphicx,color}
\usepackage{amsmath}
\usepackage{amssymb}
\usepackage{amsfonts}
\usepackage{mathrsfs}

\newtheorem{theo}{Theorem}[section]
\newtheorem{theo-app}{Theorem}[section]

\newtheorem{theorem}[theo]{Theorem} 
\newtheorem{corollary}[theo]{Corollary}
\newtheorem{proposition}[theo]{Proposition}

\newtheorem{lemma}[theo]{Lemma}
\theoremstyle{definition}

\newtheorem{remark}[theo]{Remark}
\newtheorem*{remark*}{Remark}
\newtheorem{definition}[theo]{Definition}

\numberwithin{equation}{section}


\renewcommand{\hat}{\widehat}

\newcommand{\sD}{\mathscr{D}}

\newcommand{\sT}{\mathscr{T}}
\newcommand{\sA}{\mathscr{A}}

\DeclareMathOperator{\cl}{cl}

\DeclareMathOperator{\BD}{BD}

\newcommand{\C}{\mathbb{C}}
\newcommand{\Z}{\mathbb{Z}}

\newcommand{\D}{\mathbb{D}}
\newcommand{\mbA}{\mathbb{A}}

\DeclareMathOperator{\var}{var}

\DeclareMathOperator{\Exp}{Exp}
\DeclareMathOperator{\Lyap}{Lyap}
\DeclareMathOperator{\CE2}{CE2}
\DeclareMathOperator{\Per}{Per}

\DeclareMathOperator{\diam}{diam}

\DeclareMathOperator{\dist}{Dist}

\DeclareMathOperator{\Crit}{{\rm Crit}}

\DeclareMathOperator{\Comp}{{\rm Comp}}

\DeclareMathOperator{\tree}{{\rm tree}}
\DeclareMathOperator{\Const}{{\rm Const}}

\DeclareMathOperator{\Fr}{{\rm Fr}}
\DeclareMathOperator{\HD}{{\rm HD}}
\DeclareMathOperator{\hyp}{{\rm hyp}}

\def\bR{\mathbb{R}}

\newcommand{\cA}{\mathscr{A}}

\def\cL{\EuScript{B}}
\def\cC{\EuScript{C}}

\def\cM{\EuScript{M}}

\def\cL{\EuScript{L}}

\def\cB{\mathscr{B}}

\def\sN{\mathcal{N}}

\newcommand{\cH}{\mathcal{H}}

\newcommand{\cP}{\mathcal{P}}

\def\a{\alpha}        \def\La{\Lambda}
\def\z{\zeta}             
\def\e{\varepsilon}      \def\b{\beta}
            \def\d{\delta}
\def\la{\lambda}   

\def\ov{\overline}
 \def\dist{{\rm {dist}}}
 \def\g{\gamma}         \def\s{\sigma}  \def\S{\Sigma}
 \def\Om{\Omega}  \def\La{\Lambda}



\author[F. Przytycki]{Feliks Przytycki$^\dag$} \address{Institute of Mathematics, Polish Academy of Sciences, ul. \'{S}niadeckich 8, 00-656 Warszawa, Poland}
\email{feliksp@impan.pl}


\thanks{Partially supported by Polish NCN Grant 
2014/13/B/ST1/01033}

\begin{document}

\date{\today}

\title[Thermodynamic formalism]{Thermodynamic formalism methods in one-dimensional real and complex dynamics}

\keywords{one-dimensional dynamics, geometric pressure, thermodynamic formalism, equilibrium states, volume lemma, radial growth, Hausdorff measures, Law of Iterated Logarithm, Lyapunov exponents, harmonic measure}

\subjclass[2000]{Primary: 37D35; Secondary: 37E05, 37F10, 37F35, 31A20}

\maketitle

\begin{abstract}
We survey some results on non-uniform hyperbolicity, geometric pressure and equilibrium states in one-dimensional real and complex dynamics. We present some relations with Hausdorff dimension and measures with refined gauge functions of limit sets for geometric coding trees for rational functions on the Riemann sphere.
We discuss fluctuations of iterated sums of the potential $-t\log |f'|$ and of radial growth of derivative of univalent functions on the unit disc and the boundaries of range domains preserved by a holomorphic map $f$ repelling towards the domains.
\end{abstract}

\tableofcontents

\maketitle


\section{Thermodynamic formalism,  introductory notions}\label{Introduction}

  Among founders of this theory are
    \cite{Sinai}, \cite{Bowen1975} and David Ruelle, who wrote in \cite{Ruelle}: ``thermodynamic formalism  has been developed since G. W. Gibbs to describe [...] physical systems consisting of a large number of subunits''. In particular one considers a {\it configuration space} $\Om$ of functions $\Z^n\to \mbA$ on the lattice $\Z^n$ with interacting values in $\mbA$ over its sites, e.g.~``spin'' values in the Ising model of ferromagnetism. One considers probability distributions on $\Om$, invariant under translation, called {\it equilibrium states} for potential functions on $\Om$.

Given a mapping $f:X\to X$ one considers as a configuration space the set of trajectories $n\mapsto (f^n(x))_{n\in \Z_+}$ or $n\mapsto \Phi(f^n(x))_{n\in \Z_+}$ for a test function $\Phi:X\to Y$.

\

The following simple fact \cite[Lemma 1.1]{Bowen1975} and \cite[Introduction]{Ruelle}, \cite[Introduction]{PUbook}, resulting from Jensen's inequality applied to the function logarithm, stands at the
heart of thermodynamic formalism.

\begin{lemma}[Finite Variational Principle]\label{finite}
For given real
numbers $\phi_1,\dots,\phi_d$, the function
$F(p_1,\dots p_d):=\sum_{i=1}^n -p_i\log p_i+
\sum_{i=1}^d p_i \phi_i$ defined
on the simplex $\{(p_1,\dots ,p_d):p_i\ge 0, \sum_{i=1}^d p_i=1\}$
attains its maximum value

\noindent $P(\phi_1,\dots, \phi_d)=\log\sum_{i=1}^de^{\phi_i}$  at
and  only at
$\hat p_j=e^{\phi_j}\bigl(\sum_{i=1}^de^{\phi_i}\bigr)^{-1}.$
\end{lemma}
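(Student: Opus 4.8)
The plan is to follow the hint and reduce the statement to Jensen's inequality for the strictly concave logarithm --- equivalently, to the nonnegativity of relative entropy. Set $Z:=\sum_{i=1}^{d}e^{\phi_i}$ and $\hat p_j:=e^{\phi_j}/Z$, so that $(\hat p_1,\dots,\hat p_d)$ is itself a point of the simplex. With the usual convention $0\log 0=0$ the function $F$ is continuous on the compact simplex, hence attains a maximum; it remains only to locate it and to compute its value.

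For an arbitrary probability vector $(p_1,\dots,p_d)$, with all sums below restricted to the indices $i$ for which $p_i>0$, I would rewrite
$$
F(p_1,\dots,p_d)=\sum_i p_i\bigl(\phi_i-\log p_i\bigr)=\sum_i p_i\log\frac{e^{\phi_i}}{p_i}=\log Z+\sum_i p_i\log\frac{\hat p_i}{p_i}.
$$
Jensen's inequality for $\log$, with weights $p_i$ and arguments $\hat p_i/p_i$, then yields
$$
\sum_i p_i\log\frac{\hat p_i}{p_i}\le\log\sum_i p_i\,\frac{\hat p_i}{p_i}=\log\sum_{i:\,p_i>0}\hat p_i\le\log 1=0 ,
$$
so $F(p_1,\dots,p_d)\le\log Z=P(\phi_1,\dots,\phi_d)$; and for $p=\hat p$ the last sum in the first display vanishes term by term, giving $F(\hat p_1,\dots,\hat p_d)=\log Z$. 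This establishes both the maximal value and the fact that it is attained at $\hat p$.

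Uniqueness comes from the \emph{strict} concavity of $\log$: equality in the Jensen step forces the ratios $\hat p_i/p_i$ to equal a common constant $c$ over all $i$ with $p_i>0$, while equality in the final inequality forces $\sum_{i:\,p_i>0}\hat p_i=1$, i.e.\ $p_i>0$ whenever $\hat p_i>0$, hence for every $i$ since each $\hat p_i>0$. Then $\hat p_i=c\,p_i$ for all $i$, and summing over $i$ gives $c=1$, so $p=\hat p$.

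The argument is essentially routine; the only place needing a little care is the boundary of the simplex, where some coordinates vanish --- one must use the convention $0\log 0=0$ consistently so that $F$ is genuinely continuous there, and check (as the second inequality above does, using $\hat p_i>0$ for all $i$) that no maximizer has a zero coordinate. Equivalently, the whole computation can be packaged as $F(p)=\log Z-D(p\,\Vert\,\hat p)$, where $D(p\,\Vert\,\hat p):=\sum_i p_i\log(p_i/\hat p_i)\ge 0$ is the Kullback--Leibler divergence, whose strict positivity off the diagonal yields uniqueness at once.
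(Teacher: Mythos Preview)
Your proof is correct and follows exactly the approach the paper indicates: the lemma is stated there without proof, with only the remark that it ``result[s] from Jensen's inequality applied to the function logarithm,'' which is precisely what you carry out via the identity $F(p)=\log Z-D(p\Vert\hat p)$. Your treatment of the boundary case and of uniqueness is careful and complete.
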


We can read $i\mapsto \phi_i, i=1,\dots ,d$ as a {\it potential} function
and $\hat p_i$ 
as the equilibrium probability
distribution on the finite space $\{1,\dots ,d\}$.
$P(\phi_1,\dots, \phi_d)$ is called the {\it pressure} or
{\it free energy}, see \cite{Ruelle}.

\

Let $f:X\to X$ be a continuous mapping of a compact metric space $X$ and $\phi:X\to \mathbb{R}$ be a continuous function (the potential). We define the {\it topological pressure} or free energy by
\begin{definition}\label{top_pres}
\begin{equation}\label{var_pres}
P_{\rm var}(f,\phi)=
      \sup_{\mu\in{\cM}(f)}\left( h_\mu(f)+\int_X \phi \,d\mu\right),
      \end{equation}
where ${\cM}(f)$ is the set of all $f$-invariant Borel probability measures on $X$ and $h_\mu(f)$ is measure theoretical entropy. Sometimes we write $\cM(f,X)$.
\end{definition}
Recall that $h_\mu(f)=\sup_{\mbA} \lim_{n\to\infty} \frac1{n+1} \sum_{A\in {\mbA}^n} -\mu(A)\log\mu(A)$, where the supremum is taken over
finite partitions ${\mbA}$
of $X$, where ${\mbA}^n:=\bigvee_{j=0,\dots, n} f^{-j}{\mbA}$.
Notice that this resembles the sum $\sum_{i=1}^n -p_i\log p_i$ in Lemma \ref{finite}.

\smallskip

Topological pressure can also be defined in other ways, e.g.~by \eqref{Psep}, and then its equality to the one given by \eqref{var_pres} is called the variational principle. This explains the notation
$P_{\rm var}$.
Any $\mu\in{\cM}(f)$ for which the supremum in \eqref{var_pres} is attained is called {\it equilibrium}, {\it equilibrium measure} or {\it equilibrium state}.

\smallskip

A model case is any map $f:U\to \mathbb{R}^n$ of class $C^1$, defined on a neighbourhood $U$ of a compact set $X\subset \mathbb{R}^n$, {\it expanding} (another name: {\it uniformly expanding} or {\it hyperbolic} in dimension 1)
that is there exist $C>0, \lambda>1$
such that for all positive integers $n$  all $x\in X$ and all $v$ tangent to $\mathbb{R}^n$ at $x$,
\begin{equation}\label{expanding}
||Df^n(v)||\ge C\lambda^n ||v||,
\end{equation}
and {\it repelling} that is every forward trajectory sufficiently close to $X$ must be entirely in $X$. Not assuming the differentiability of $f$ one uses the notion of {\it distance expanding} meaning the increase of distances
under the action of $f$ by a factor at least $\lambda>1$ for pairs of distinct points sufficiently close to each other. Repelling happens to be equivalent to the internal condition: $f|_X$ being an open map,
provided $f$ is open on a neighbourhood of $X$, see \cite[Lemma 6.1.2]{PUbook}. Then the classical theorem holds, here in the version from \cite[Section 5.1]{PUbook}:

\begin{theorem} \label{Gibbs}
Let $f:X\to X$ be a distance expanding, topologically transitive continuous open map of a compact metric space $X$ and  $\phi:X\to\mathbb{R}$ be a  H\"older continuous potential. Then, there exists
exactly one measure $\mu_\phi\in\cM(f,X)$, called the {\it Gibbs} measure, satisfying
\begin{equation}\label{Gibbs-eq}
C<\frac{\mu_\phi(f_x^{-n}(B(f^n(x),r_0))}{\exp (S_n\phi(x)-nP(\phi))}<C^{-1}
\end{equation}
where $f_x^{-n}$ is the branch of $f^{-n}$ mapping $f^n(x)$ to $x$ (locally making sense, since $f$ is a local homeomorphism) and $S_n\phi(x):=\sum_{j=0}^{n-1}\phi (f^j(x))$.

The measure $\mu_\phi$ is the only equilibrium state for $\phi$.
 It is equivalent to the unique $\phi$-conformal measure $m_\phi$, that is a forward quasi-invariant Borel  probability measure $m_\phi$ with Jacobian
$\exp -(\phi-P(\phi))$. Moreover, the limit
$P(\phi)=P(f,\phi):=$

\noindent $\lim_{n\to\infty}\frac1n\log \sum_{x\in f^{-n}(x_0)}\exp S_n\phi(x)$ exists
and is equal to $P_{\var}(f,\phi)$ for every $x\in X$.

\end{theorem}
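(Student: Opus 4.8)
The plan is to construct $\mu_\phi$ from the Ruelle--Perron--Frobenius transfer operator and then to extract every asserted property from a single uniform bounded distortion estimate. Define, for $g\in C(X)$,
\[
\mathcal{L}_\phi g(x)=\sum_{y\in f^{-1}(x)}e^{\phi(y)}g(y);
\]
since an open distance expanding map of a compact space is a local homeomorphism with a uniformly bounded number of preimages, $\mathcal{L}_\phi$ is a bounded positive operator on $C(X)$. First I would produce an eigenmeasure by applying the Schauder--Tychonoff fixed point theorem to the continuous self-map $\nu\mapsto\mathcal{L}_\phi^*\nu/(\mathcal{L}_\phi^*\nu)(\mathbf{1})$ of the weak-$*$ compact convex set of Borel probability measures on $X$; this gives a probability measure $m$ and a number $\lambda>0$ with $\mathcal{L}_\phi^*m=\lambda m$. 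Unwinding the adjoint through the change-of-variables formula shows that $m$ has Jacobian $\lambda e^{-\phi}=\exp(-(\phi-\log\lambda))$, i.e.\ $m$ is a $\phi$-conformal measure in the sense of the theorem with $P(\phi):=\log\lambda$. Since $(\mathcal{L}_\phi^n\mathbf{1})(x_0)=\sum_{x\in f^{-n}(x_0)}e^{S_n\phi(x)}$, the limit formula $P(\phi)=\lim_n\frac1n\log\sum_{x\in f^{-n}(x_0)}e^{S_n\phi(x)}$ will drop out once $\lambda^{-n}\mathcal{L}_\phi^n\mathbf{1}$ is shown to be bounded above and below uniformly in $n$ and $x_0$.

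That boundedness, and much more, comes from the distortion estimate: because $f^n$ expands distances on a ball of radius $r_0$ at an exponential rate while $\phi$ is H\"older, every inverse branch $f_x^{-n}$ defined on $B(f^n(x),r_0)$ satisfies $|S_n\phi(f_x^{-n}(z_1))-S_n\phi(f_x^{-n}(z_2))|\le C_0$ uniformly in $n$, by summing a convergent geometric series of H\"older increments. Hence $\lambda^{-n}\mathcal{L}_\phi^n\mathbf{1}$ is uniformly bounded and equicontinuous, so by Arzel\`{a}--Ascoli some subsequential limit $h$ of the Ces\`{a}ro averages $\frac1N\sum_{n=0}^{N-1}\lambda^{-n}\mathcal{L}_\phi^n\mathbf{1}$ is a continuous nonnegative eigenfunction, $\mathcal{L}_\phi h=\lambda h$, with $\int h\,dm=1$; $h$ is strictly positive by a standard propagation argument using topological transitivity together with the uniform distortion bound. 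Set $\mu_\phi:=h\,m$. Then the identity $\mathcal{L}_\phi((g\circ f)h)=g\,\mathcal{L}_\phi h=\lambda\,g\,h$ combined with $\mathcal{L}_\phi^*m=\lambda m$ gives $\int g\circ f\,d\mu_\phi=\int g\,d\mu_\phi$, so $\mu_\phi\in\cM(f,X)$. The Gibbs inequality \eqref{Gibbs-eq} is then read off from
\[
\mu_\phi\bigl(f_x^{-n}(B(f^n(x),r_0))\bigr)=\int_{B(f^n(x),r_0)}(h\circ f_x^{-n})(z)\,e^{S_n\phi(f_x^{-n}(z))-nP(\phi)}\,dm(z),
\]
because the density factor $h\circ f_x^{-n}$, the distortion factor, and $m(B(f^n(x),r_0))$ (the last bounded below since $m$, being $\phi$-conformal, has full support by transitivity) are all pinched between two positive constants independent of $x$ and $n$.

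For the uniqueness statements I would argue: any $\phi$-conformal probability measure is, by the same distortion estimate, comparable to $\mu_\phi$ on every set $f_x^{-n}(B(f^n(x),r_0))$, so a Lebesgue density / Besicovitch covering argument forces it to be absolutely continuous with respect to $m$ with density bounded above and below, and the eigenvalue equation together with transitivity pins that density down to $h$; hence $m_\phi=m$ is unique and $\mu_\phi\sim m_\phi$. The Gibbs property also yields ergodicity of $\mu_\phi$. For the variational principle, fix a finite partition $\mathcal{A}$ of diameter $<r_0$ (generating, as $f$ is expansive): the Shannon--McMillan--Breiman theorem applied to $\mathcal{A}$ together with the upper bound $\mathcal{L}_\phi^n\mathbf{1}\le C\lambda^n$ gives $h_\mu(f)+\int\phi\,d\mu\le P(\phi)$ for every $\mu\in\cM(f,X)$, while for $\mu=\mu_\phi$ the two-sided Gibbs bound and Birkhoff's theorem turn the same limit into the equality $h_{\mu_\phi}(f)+\int\phi\,d\mu_\phi=P(\phi)$; hence $P_{\var}(f,\phi)=P(\phi)$ and $\mu_\phi$ is an equilibrium state. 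Finally, any equilibrium state is absolutely continuous with respect to $m_\phi$ (from the equality case of the Jensen/Gibbs inequality used in the upper bound along $\mathcal{A}$), hence equals $\mu_\phi$ by ergodicity.

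The main difficulty is not a single dramatic step but rather making the bounded distortion estimate genuinely uniform in $n$ and invoking topological transitivity in exactly the right places --- for the strict positivity of $h$, for the full support of $m$, and for uniqueness; drop transitivity and $h$ may vanish on part of $X$, whereupon both the Gibbs bound and the uniqueness break down. (Alternatively, one can obtain the whole package at once, with exponential decay of correlations and hence mixing, from a Lasota--Yorke inequality or from a Birkhoff cone contraction in the Hilbert metric for the normalized operator $g\mapsto\frac{1}{\lambda h}\mathcal{L}_\phi(hg)$, which equals $\mathcal{L}_\psi$ for $\psi=\phi-\log\lambda+\log h-\log(h\circ f)$; its iterates then converge uniformly to the rank-one projection $g\mapsto(\int g\,d\mu_\phi)\mathbf{1}$.)
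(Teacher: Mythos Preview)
Your proposal is correct and follows exactly the route the paper sketches immediately after the statement: obtain the conformal measure $m_\phi$ as a Schauder--Tychonoff fixed point for $\cL^*$ divided by its norm, produce a positive eigenfunction $u_\phi$ (your $h$) of $\cL$, and set $\mu_\phi=u_\phi\, m_\phi$. The paper, being a survey, gives only this outline and refers to \cite{PUbook} for the details you supply (bounded distortion, Arzel\`a--Ascoli for the eigenfunction, the Gibbs inequality, uniqueness and the variational principle).
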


This $P(\phi)$ is a normalizing quantity corresponding to $P(\phi_1,\dots, \phi_d)$ in Lemma \ref{finite}
and the sum in the definition of $P(\phi)$
corresponds to the so called {\it statistical sum} over the space $\Om_n$ of all admissible configurations over  $\{0,1,\dots,n-1\}$, as in the Ising model. Compare to the {\it tree pressure} defined in Definition \ref{treep}.

 So $\varsigma:\Sigma^d\to\Sigma^d$, the shift to the left on the space
 $\Sigma^d=\{(\a_0,\a_1,\dots ): \a_j\in\{1,\dots,d\}\}$, defined by $\varsigma((\a_n))=(\a_{n+1})$, is an example where Theorem \ref{Gibbs} holds. The sets $f_x^{-n}(B(f^n(x),r_0)$ correspond to {\it cylinders} of fixed
 $\{\a_j\in\{1,\dots,d\}, j=0,\dots,n-1\}$. One can impose an admissibility condition:
 $\a_i\a_{i+1}$ admissible if the pair has the digit 1 attributed in a defining 0,1 $d\times d$ matrix. Then one calls the system a {\it one-sided topological Markov chain}.

 The condition of openness of $f$ can be replaced by a weaker one: the existence of a finite Markov partition, see \cite{PUbook}.

\smallskip

The existence of a conformal measure follows from the existence of a fixed point in the
convex weakly*-compact set of probability measures for the dual operator to the transfer (Perron-Frobenius-Ruelle) operator $\cL$ divided by the norm, where for $u:X\to\mathbb{R}$ continuous one defines

\begin{equation}\label{transfer}
\cL(u)(x):= \sum_{y\in f^{-1}(x)} u(y)\exp\phi(y).
\end{equation}

Indeed,
for every Borel set $Y\subset X$ on which $f$ is injective, denoting by $I_Y$ indicator function: 1 on $Y$, 0 outside $Y$, due to an approximation by continuous functions, one has for every finite Borel measure $\nu$ on $X$
\begin{equation}
(\cL^*(\nu))(Y))=\int_X \cL(I_Y) \,d\nu =\int_{f(Y)}\exp\phi\circ f|_Y^{-1}\,d\nu.
\end{equation}
Hence the (positive) eigen-measure $m_\phi$ has Jacobian for $(f|_Y)^{-1}$ equal to
$\exp(\phi\circ f|_Y^{-1})/\lambda$, hence $f$ has Jacobian
$\exp(-\phi)$ multiplied by an eigenvalue $\lambda:=\exp P(\phi)$.

The proof of the existence of an invariant Gibbs measure equivalent to $m_\phi$ is harder.
One first proves the existence of a positive eigenfunction $u_\phi$ for $\cL$ and then
defines $\mu_\phi=u_\phi m_\phi$.
For a more complete introduction to this theory, see e.g.~\cite{PUbook}.

\section{Introduction to dimension 1}\label{dim1}

Thermodynamic formalism is useful for studying properties of the underlying space $X$. 
In dimension 1, for $f$ real of class $C^{1+\e}$ or $f$ holomorphic, for an expanding repeller $X$, considering $\phi=\phi_t:=-t\log |f'|$ for $t\in\mathbb{R}$, \eqref{Gibbs-eq} gives
\begin{align}
\mu_{\phi_t}(f_x^{-n}(B(f^n(x),r_0)))\approx \exp (S_n\phi(x)-nP(\phi_t))\approx \label{diam}
\\
\diam f_x^{-n}(B(f^n(x),r_0))^{t}\exp -nP(\phi_t). \nonumber
\end{align}
The latter follows from a comparison of the diameter with the inverse of the absolute value of the derivative of $f^n$ at $x$, due to  {\it bounded distortion}. Here, the symbol ``$\approx$'' denotes that the mutual ratios are bounded by a constant.

\smallskip

When $t=t_0$ is a zero of the function $t\mapsto P(\phi_t)$, this gives
\begin{equation}\label{Bowen}
\mu_{\phi_{t_0}} (B)\approx (\diam B)^{t_0}
\end{equation}
for all small balls $B$ (the $t_0$-Ahlfors measure property).
We obtain the so-called Bowen's formula for Hausdorff dimension:
\begin{equation}\label{HDformula}
\HD(X)=t_0.
\end{equation}
Moreover, the Hausdorff measure of $X$ in this dimension is finite and nonzero.

\smallskip

A model example of application is the proof of
\begin{theorem}\label{z2+c}
For $f_c(z):= z^2+c$ for  an arbitrary complex number $c\not=0$ sufficiently close to 0,
the invariant Jordan curve $J$ (Julia set for $f_c$) is a fractal, i.e.~has Hausdorff dimension bigger than 1.
\end{theorem}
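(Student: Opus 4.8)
The plan is to derive the statement from Bowen's formula \eqref{HDformula}. For $|c|$ small, $c$ lies in the main cardioid of the Mandelbrot set, so $f_c$ is hyperbolic and $J$ is a holomorphic expanding repeller; hence $\HD(J)=t_0(c)$, where $t_0(c)$ is the unique zero of the strictly decreasing, strictly convex function $t\mapsto P_c(t):=P(f_c,-t\log|f_c'|)$. At $c=0$ one has $J=S^1$ and $|f_0'|\equiv 2$ on $S^1$, so $P_0(t)=h_{\mathrm{top}}(z\mapsto z^2)-t\log 2=(1-t)\log 2$ and $t_0(0)=1$, recovering $\HD(S^1)=1$. Thus it suffices to prove $t_0(c)>1$ for $0<|c|<\e$.

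Two preliminary facts are needed. (i) The map $c\mapsto t_0(c)$ is real-analytic on the main cardioid: $P_c(t)$ is real-analytic in $(c,t)$, being the logarithm of the leading eigenvalue of the transfer operator \eqref{transfer}, which depends analytically on $(c,t)$ and retains a spectral gap under small perturbations; since $\partial_t P_c(t)=-\int\log|f_c'|\,d\mu_{\phi_t}<0$ on an expanding repeller, the implicit function theorem applies. (ii) $t_0(c)=\HD(J)\ge 1$ for every such $c$, because $J$ is a Jordan curve, hence of topological (so Hausdorff) dimension at least $1$.

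The heart of the matter is the second-order Taylor expansion of $t_0$ at $c=0$. I would transport the problem to the fixed model $f_0=z^2$ on $S^1$ via the B\"ottcher holomorphic motion $h_c\colon S^1\to J$: for $c$ in the main cardioid the finite critical point lies in the basin of the attracting fixed point, so $\psi_c\colon(\text{basin of }\infty)\to\C\setminus\overline{\D}$ is a biholomorphism which extends continuously to the boundary, and $h_c:=\psi_c^{-1}|_{S^1}$ conjugates $z\mapsto z^2$ to $f_c$, depending analytically on $c$. The potential becomes $-t\log|f_c'\circ h_c|=-t(\log 2+\log|h_c|)$. Differentiating the conjugacy $h_c(z^2)=h_c(z)^2+c$ at $c=0$ (where $h_0=\id$) gives $\dot h_0(z^2)=2z\,\dot h_0(z)+1$; solving among functions holomorphic on $|z|>1$ that vanish at $\infty$ yields $\dot h_0(z)/z=-\sum_{k\ge 1}2^{-k}z^{-2^k}$, whose restriction to $S^1$ has zero mean. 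Hence the first variation of $P_c(1)$ at $c=0$ is $\int_{S^1}\partial_c|_{c=0}\bigl(-\log|f_c'\circ h_c|\bigr)\,d\Leb=0$, so $\nabla t_0(0)=0$. By Ruelle's second-derivative-of-pressure formula the second-order term of $P_c(1)$ equals $\tfrac12\bigl(\int\ddot\phi\,d\Leb+\sigma^2_{\Leb}(\dot\phi)\bigr)$, a quadratic form in $c$ which is $\ge 0$ by (ii); carrying out the computation it equals $\tfrac14|c|^2$, strictly positive, the crucial point being that $\dot\phi$ is not cohomologous to a constant for $z\mapsto z^2$ (its Fourier support forces any transfer function to have non-decaying coefficients) — the analytic shadow of $J$ not being a round circle when $c\ne 0$. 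Therefore $\HD(J)=t_0(c)=1+\tfrac{|c|^2}{4\log 2}+O(|c|^3)>1$ for $0<|c|<\e$.

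The main obstacle is this last step: proving that the quadratic coefficient is positive-definite — equivalently, that the first variation of the potential is not a coboundary — and running Ruelle's variance formula cleanly through the holomorphic motion; justifying the analyticity and the properties of $h_c$ is routine but must be done with care. A computation-free alternative is available: if $\HD(J)=1$, then by the remark following \eqref{HDformula} the $1$-dimensional Hausdorff measure of $J$ is finite and positive, so $J$ is a rectifiable Jordan curve, whence the harmonic measure of its complementary domains — which coincides with the measure of maximal entropy $\mu_{\max}$ of $f_c$ — is, by the F.\ and M.\ Riesz theorem, absolutely continuous with respect to arclength, so $\HD(\mu_{\max})=1=\HD(J)$; by Zdunik's dichotomy this forces $f_c$ to be exceptional (conjugate to a power, Chebyshev or Latt\`es map), and in the family $z^2+c$ this occurs only for $c=0$ and $c=-2$, so for $|c|$ small we get $c=0$; combined with (i) and (ii) this already yields $\HD(J)>1$ for $c\ne 0$ near $0$. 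Equivalently, Makarov's theorem (the harmonic measure of a simply connected domain has Hausdorff dimension exactly $1$) together with Zdunik's theorem gives $\HD(J)>\HD(\mu_{\max})=1$ directly for the non-exceptional maps $f_c$, $c\ne 0$.
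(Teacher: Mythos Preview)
Your proposal is correct. The perturbative first approach (B\"ottcher holomorphic motion, first and second variation of pressure, Ruelle's formula $\HD(J)=1+\tfrac{|c|^2}{4\log 2}+O(|c|^3)$) is a genuinely different route from the paper: it is quantitative and yields the asymptotic expansion of $\HD(J)$, at the price of carrying out the spectral perturbation and variance computation. The paper's sketch is purely qualitative and avoids all of this: it argues by dichotomy on $t_0$, and when $t_0=1$ uses the Ahlfors property \eqref{Bowen} to get rectifiability, then --- rather than invoking Zdunik's dichotomy as you do in your ``computation-free alternative'' --- it compares the $f$-invariant measures equivalent to harmonic measure on $\Fr\Om$ viewed from the \emph{two} complementary domains (basin of $\infty$ and basin of the attracting fixed point); by Birkhoff's Ergodic Theorem these must coincide, and this forces $J$ to be a round circle (Sullivan's ``echo of Mostow rigidity'', \cite{Sullivan:82}, \cite[Theorem 9.5.5]{PUbook}), hence $c=0$.

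So your alternative and the paper's proof share the opening move (rectifiability from $t_0=1$) but diverge at the rigidity step: you invoke Theorem~\ref{Z1-maximal} (Zdunik), which in this survey is established only later via CLT fluctuations and cohomology, whereas the paper keeps the argument self-contained by the elementary two-sided harmonic measure trick. Your Makarov/Zdunik variant at the very end is also correct but again leans on later material. Each approach has its merit: yours (Ruelle) gives an explicit rate, the paper's is a clean existence argument illustrating how ergodic theory replaces analysis.
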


\begin{proof}[Sketch of Proof] $t_0>1$ yields $\HD(J)=t_0>1$ by \eqref{Bowen} (one does not need to use the invariance of $\mu_{\phi_{t_0}}$). 

The case $t_0=1$ yields by \eqref{Bowen} finite Hausdorff measure in dimension 1, i.e.~the rectifiability of $J$.
To conclude that $J$ is a circle and $c=0$, one can use ergodic invariant measures in the classes of harmonic ones on $J$ from inside and outside. They  must coincide.
  This relies on Birkhoff's Ergodic Theorem, the heart of ergodic theory.  This is an ``echo'' of the celebrated Mostov Rigidity Theorem. See \cite{Sullivan:82} and \cite[Theorem 9.5.5]{PUbook}.
\end{proof}

\bigskip

In dimension 1 (real or complex), we call $c$ a critical point if the derivative $f'(c)=0$. The set of critical points will be denoted by $\Crit(f)$.

\smallskip

In this survey, we allow for the presence of critical points and concentrate mainly on two cases:

\smallskip

1. (Complex case) $f$ is a rational mapping of degree at least 2 of the Riemann sphere $\ov{\C}$. We consider $f$ acting on its Julia set $K=J(f)$.

\smallskip

For entire or meromorphic maps see e.g.~\cite{BKZ1, BKZ2}, compare Definition \ref{hypdim}.

\smallskip

2. (Real case) $f$ is a {\it generalized multimodal map} defined on a neighbourhood $U_K\subset\bR$ of its compact  invariant subset $K$. We assume that $f\in C^2$,
is non-flat at all of its turning and inflection critical points, satisfies the  {\it bounded distortion} property for iterates, abbr. BD, see \cite{PrzRiv:14},  is topologically transitive and has positive
topological entropy on $K$.

We assume that $K$ is a maximal invariant subset of a finite union of pairwise disjoint closed intervals $\hat I=I^1\cup \dots \cup I^k \subset U_K$ whose endpoints are in $K$. (This maximality corresponds to the Darboux property, compare \cite[Appendix A]{PrzRiv:14} and \cite[page 49]{MiSzlenk}.)
We write $(f,K)\in {\cA}^{\BD}_+$, with the subscript + to mark positive entropy. In place of BD one can assume $C^3$ (and write $(f,K)\in {\cA}^3_+$), and assume that all periodic orbits in $K$ are hyperbolic repelling. Indeed, changing  $f$ outside $K$ if necessary, one can get the corrected $(f,K)$ in ${\cA}^{\BD}_+$.

\smallskip

 Recall the notions concerning periodic orbits: {\it Parabolic} means $f^n(p)=p$ with $(f^n)'(p)$ being a root of unity. For $|(f^n)'(p)|=1$ the term {\it indifferent periodic} is used and for $|(f^n)'(p)|>1$ the term {\it hyperbolic repelling}. If $|(f^n)'(p)|<1$ the orbit is called {\it hyperbolic attracting}.

\smallskip

For the real setting, see \cite{PrzRiv:14}, \cite{GPR2} and \cite{Prz:16}. Examples are provided by basic sets in the spectral decomposition \cite{dMvS}.

\smallskip
{\bf Question.} Are there any other examples?

\smallskip

{\bf Problem}. Generalize the real case theory, see further sections, to the piecewise continuous maps, that is allow the intervals $I^j$ to have common ends (see \cite{HU} for some results in this direction).

\smallskip

 In this survey, we compare equilibrium states to (refined) Hausdorff measures in the complex case. For the real case, we refer the reader to \cite{HKe}  and the references therein.

\section{Hyperbolic potentials}\label{Hyperbolic potential}

For general $f:X\to X$ and $\phi:X\to \mathbb{R}$ as in Definition \ref{top_pres} the following conditions
are of special interest \cite{InoRiv:12},

1) $P(f,\phi)>\sup\phi$,

2) $P(f^n,S_n\phi)>\sup_{X} S_n\phi$
for an integer $n$,

3) $P(f,\phi)>\sup_{\nu\in\cM(f)}\int\phi\, d\nu$,

4) For each equilibrium state $\mu$ for the potential $\phi$, the entropy $h_\mu(f)$ is positive. 

 \smallskip

 The conditions 2) -- 4) are equivalent, see \cite[Proposition 3.1]{InoRiv:12}.
 Potentials $\phi$ satisfying them have been called in \cite{InoRiv:12} {\it hyperbolic}.
 The condition 1) has longer traditions, see \cite{DenUrb:91}.
 The intuitive meaning is that no minority of trajectories carries the full pressure.

 For every $f:\ov\C\to\ov\C$ rational of degree at least 2 and $\phi:J(f)\to\mathbb{R}$ H\"older continuous,
 the
 following condition is also equivalent to 2) -- 4), see \cite{InoRiv:12}:

 \smallskip

 5) For each ergodic equilibrium state $\mu$ for $\phi$, the Lyapunov exponent $\chi(\mu):=\int \log |f'|\,d\mu$ is positive, that is for $\mu$-a.e. $x$, \
$\chi(\mu)=\chi(x):=\lim_{n\to\infty}\frac1n\log |(f^n)'(x)|>0.$

\smallskip

  The conditions 2)-5) are also equivalent in the real case for $(f,K)\in\sA^{\BD}_+$ or $(f,K)\in\sA^3_+$ and all periodic orbits hyperbolic repelling. The arguments in \cite{InoRiv:12} work. See also \cite{RivLi2}.

\smallskip

\begin{theorem}\label{equi-hyp}
Let $f:\ov\C\to\ov\C$ be a rational mapping as above. If $\phi$ is a H\"older continuous hyperbolic potential on $J(f)$, then there exists a unique equilibrium state $\mu_\phi$. For every H\"older $u:J(f)\to\mathbb{R}$, the Central Limit Theorem (abbr. CLT) for the sequence of random variables $u\circ f^n$ and $\mu_\phi$ holds.
\end{theorem}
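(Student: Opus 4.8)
The plan is to reduce the theorem to the uniformly expanding case (Theorem \ref{Gibbs}) via an inducing scheme, and then transfer the statistical consequences back to the original system. The existence and uniqueness of the equilibrium state $\mu_\phi$ is the more structural part; the Central Limit Theorem is then obtained from good tail estimates for the return time of the inducing scheme.

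\textbf{Step 1: Uniqueness via the transfer operator on a conformal measure.}
First I would establish the existence of a $\phi$-conformal measure $m_\phi$ with eigenvalue $\lambda = \exp P(f,\phi)$ by the usual fixed-point argument for the dual $\cL^*$ of the transfer operator \eqref{transfer}, exactly as sketched after Theorem \ref{Gibbs}; since $f$ is rational on $J(f)$ (compact, and $f$ open) this is standard. Hyperbolicity of $\phi$ (condition 5) means every ergodic equilibrium state has positive Lyapunov exponent, so by Pesin theory such a measure is nonuniformly expanding. I would then invoke the known structure theory for hyperbolic potentials of rational maps: any equilibrium state is absolutely continuous with respect to $m_\phi$, and two such measures with the same (H\"older) potential must coincide because their densities solve the same eigenequation $\cL_\phi u = \lambda u$ with $u$ bounded away from $0$ and $\infty$ on the relevant part of the Julia set. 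Concretely, uniqueness follows from showing that $\cL_\phi$ (normalized) has a spectral gap on a suitable Banach space, or alternatively by a direct argument: if $\mu_1,\mu_2$ are both equilibrium, then $\mu := \tfrac12(\mu_1+\mu_2)$ is equilibrium, $h_{\mu_i}(f)>0$ by condition 4, and ergodic decomposition plus the variational equality forces a.e. ergodic component to be equilibrium; the conformal/Gibbs comparison then identifies them.

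\textbf{Step 2: Inducing scheme and the CLT.}
For the CLT I would build a first-return (or first-hyperbolic-time) Markov inducing scheme over a nice set $Y \subset J(f)$ of positive $\mu_\phi$-measure — for instance, a set of positive density of hyperbolic times, using that $\chi(\mu_\phi)>0$. On $Y$ the induced map $F = f^{\tau}$ is uniformly expanding with bounded distortion (this is where non-flatness of critical points and the H\"older condition enter, to control distortion of inverse branches), so Theorem \ref{Gibbs} applies to $(F,Y)$ with the induced potential, giving an induced Gibbs/equilibrium measure $\mu_Y$ proportional to $\mu_\phi|_Y$. For a H\"older observable $u$, one forms the induced observable $U = S_\tau u$ on $Y$; the CLT for $U$ over $(F,\mu_Y)$ is classical (Gordin/Liverani for uniformly expanding maps). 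To lift the CLT from the induced system to $(f,\mu_\phi)$ I would apply the standard tower/Abramov argument (cf. Melbourne–Nicol, Young towers), which requires the return time $\tau$ to have a second moment, $\int_Y \tau^2 \, d\mu_Y < \infty$; provided $u$ is not cohomologous to a constant, the limiting variance $\sigma^2 = \sigma^2(u) > 0$ is the usual Green–Kubo series $\sum_{n} \int (u - \int u)\,(u\circ f^n - \int u)\, d\mu_\phi$.

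\textbf{Main obstacle.}
The crux is the exponential (or at least summable second-moment) tail estimate $\mu_\phi(\tau > n) \le C\theta^n$ for the inducing time. This is where the hyperbolicity of $\phi$ must be used quantitatively: one needs that the pressure gap $P(f,\phi) > \sup_{\nu}\int\phi\,d\nu$ (condition 3) forces the mass living on the ``bad'' set of points with slow hyperbolic-time growth to be exponentially small, via a large-deviation / pressure estimate comparing $P(f,\phi)$ with the topological pressure of $\phi$ restricted to orbits that avoid returns for a long time. Making this comparison rigorous — e.g. through a Przytycki–Rivera-Letelier style exponential shrinking of components argument, or through an estimate on $\cL_\phi^n \mathbf{1}$ restricted to the bad set — is the technical heart; once the tail bound is in hand, existence, uniqueness, and the CLT all follow from the uniformly expanding theory of Section \ref{Introduction} together with well-established tower techniques.
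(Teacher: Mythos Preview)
Your inducing/tower route is valid and is essentially the alternative the paper itself sketches in Remark~\ref{inducing} (and attributes to \cite{SzoUrbZdu:15}, \cite{RivLi1}, \cite{RivLi2}), but it is not the paper's primary argument. The paper's own proof sketch, following \cite{Prz:90}, \cite{DenUrb:91} and \cite{DPU}, is a direct transfer-operator approach: one proves that the normalized iterates $\cL^n(1\!\!1)/\exp nP(f,\phi)$ converge to a positive H\"older eigenfunction $u_\phi$, uniformly at rate $\exp(-\sqrt{n})$, and then sets $\mu_\phi:=u_\phi\cdot m_\phi$. Existence, uniqueness, and the decay of correlations needed for CLT all fall out of this convergence, with no inducing, no nice sets, and no return-time tail estimate.

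The trade-offs are as you would expect. Your approach is more modular and, once the exponential tail $\mu_\phi(\tau>n)\le C\theta^n$ is in hand, gives stronger (exponential) mixing rates and a clean path to LIL and other limit laws; the paper's direct route avoids the construction of nice sets and the tail bound entirely but yields only the subexponential rate $\exp(-\sqrt{n})$. One logical wrinkle in your write-up: your Step~1 does not actually construct $\mu_\phi$; you build $m_\phi$ and then ``invoke the known structure theory'' for equilibrium states, which is what is to be proved. Existence in your scheme really comes from Step~2, by projecting the induced Gibbs measure through the tower (this is how Remark~\ref{ind2} handles the parallel Theorem~\ref{equi2}); you should make that explicit rather than treating Step~2 as only about CLT. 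The paper's approach sidesteps this ordering issue by producing $u_\phi$ first.
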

For a proof, see \cite{Prz:90} and preceding \cite{DenUrb:91}.
To find this equilibrium one can iterate the transfer operator proving
$\cL^n(1\!\!1)/\exp nP(f,\phi) \to u_\phi$. The convergence is uniformly $\exp-\sqrt{n}$ fast and the limit  is H\"older continuous, \cite{DPU}. Finally, define $\mu_\phi:=u_\phi\cdot m_\phi$, as at the end of
Section \ref{Introduction}.

\begin{remark}\label{inducing}
Given $\mu_\phi$ a priori, an efficient way
to study it is an inducing method, see \cite{SzoUrbZdu:15}, i.e.~the use of a return map $A\ni x\mapsto f^{n(x)}(x)\in A$  for $A$ and $n(x)$ adequate to $\mu_\phi$. Then one proves even an exponential convergence
(with any $u$ H\"older in place of $1\!\!1$),
which yields exponential mixing, hence stochastic laws for $u\circ f^n$ for H\"older $u$,
e.g.~CLT, LIL, compare Sections \ref{LIL} and \ref{sec:acc}. See also Remark \ref{ind2}. The key feature is the exponential decay of $\mu_\phi(A_n)$, where
$A_n:=\{x\in A: n(x)\ge n\}$.

\smallskip

See  also  \cite{BT2} for the real case, and stronger \cite{RivLi1} and \cite{RivLi2} including also the complex case proving the exponential convergence to $u_\phi$, hence CLT and LIL.
See also \cite{SzoUrbZdu:14} for endomorphisms $f$ of higher dimensional complex projective space, where 1) is replaced by a  stronger ``gap'' assumption.
\end{remark}

\


\section{Non-uniform hyperbolicity in real and complex dimension 1} 

Here we discuss a set of conditions, valid in both the real and complex situations. Below we concentrate on the case of complex rational maps  with $K=J(f)$, only remarking differences in the real case.
\smallskip

\noindent
(a) \ CE. \  {\it Collet-Eckmann condition.}\
	There exist $\la_{CE} > 1$ and $C>0$ such that for every critical point $c$ in $J(f)$, whose forward orbit does not meet other critical points,  for every $n \ge 0$ we have
$$
	|(f^n)'(f(c))|\ge C \la_{CE}^n. 
$$
Moreover, there are no parabolic (indifferent) periodic orbits.

\smallskip

\noindent
 (b) \ CE2$(z_0)$. \ {\it Backward} or {\it second Collet-Eckmann condition at $z_0 \in J(f)$.}
	There exist $\la_{CE2}>1$ and $C>0$ such that for every $n \ge 1$ and
every $w \in f^{-n}(z_0)$ (in a neighbourhood of $K$ in the real case)
$$
|(f^n)'(w)|\ge C \la_{CE2}^n.
$$
\smallskip
\noindent
(b') \ CE2. \ {\it The second Collet-Eckmann condition.} $\CE2(c)$ holds for all critical points $c$ not in the forward orbit of any other critical point.

\smallskip

\noindent
(c) \ TCE. \  {\it Topological Collet-Eckmann condition.}
        There exist  $M \ge 0, P \ge 1$, $r>0$ such that for every
$x\in K$ there exists a strictly increasing sequence of positive integers
$n_j$,  $j=1,2,\dots$, such that  $n_j \le P \cdot j$ and for each $j$ (and discs $B(\cdot)$ below understood in $\ov\C$ or $\mathbb{R}$)
\begin{equation}\label{TCE}
        \#\{0 \le i < n_j:   \Comp_{f^i(x)}f^{-(n_j-i)}B(f^{n_j}(x),r))
                \cap\Crit(f) \not= \emptyset \} \le M,
\end{equation}
where in general $\Comp_z V$ means for $z\in V$ the component of $V$ containing $z$.

\smallskip

In the real case, one adds the condition that there are no parabolic periodic orbits, which is automatically true in the case of  complex rational maps.

\smallskip
\noindent
(d) \ ExpShrink. \ {\it Exponential shrinking of components.}
        There exist $\lambda_{\Exp}>1$ and $r>0$  such that for every $x \in K$,
every $n > 0$ and every connected component $W_n$ of $f^{-n}(B(x, r))$ for the disc (interval) $B(x,r)$ in $\ov\C$ (or $\mathbb{R}$), intersecting $K$ 
\begin{equation}\label{ExpShrink}
        \diam (W_n) \le \lambda_{\Exp}^{-n}.
\end{equation}


\noindent
(e) LyapHyp ({\it Lyapunov hyperbolicity}). 
        There is a constant $\lambda_{\Lyap} > 1$ such that the Lyapunov exponent $\chi(\mu)$
of any ergodic measure $\mu\in\cM(f,K)$
        satisfies $\chi(\mu)\ge \log \la_{\Lyap}$.

\smallskip


 \noindent
(f) \ UHP. \ {\it Uniform Hyperbolicity on periodic orbits.}
        There exists $\lambda_{\Per} > 1$ such that every periodic point $p
\in K$ of period $k \ge 1$ satisfies
$$
        |(f^k)'(p)|\ge \lambda_{\Per}^k.
$$


\

We distinguish LyapHyp as the most adequate among these conditions to carry the name (strong) non-uniform hyperbolicity.\footnote{Then all H\"older continuous potentials are hyperbolic, see  Condition 5) in Section \ref{Hyperbolic potential} and \cite{InoRiv:12}.}



\begin{theorem}\label{Nonunifhyp}
1. The conditions (c)--(f) and else (b) for some $z_0$
are equivalent in the complex case. In the real case, the equivalence also holds under the assumption of weak isolation (see the definition below).

2. In the complex case, the suprema over all possible constants $\lambda_{\Exp}$,
$\lambda_{CE2}$ (supremum over all $z_0$),
 $\lambda_{\Per}$ and $\lambda_{\Lyap}$ coincide.

 3. Both CE and CE2 imply (c)--(f).

 4. If there is only one critical point in the Julia set in the complex case or if $f$ is $S$-unimodal on $K=I$ in the real case, i.e.~has just one turning critical point $c$ and negative Schwarzian derivative on $I\setminus \{c\}$,
	then all
	conditions above are equivalent to each other.

\end{theorem}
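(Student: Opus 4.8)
The plan is to run a cycle of implications among (b)--(f), carrying the associated constants along so that parts~1 and~2 emerge together, then to feed CE and CE2 into that cycle for part~3, and finally to treat the unicritical case separately for part~4. I describe the complex case with $K=J(f)$; the real case under weak isolation differs only in that the auxiliary periodic orbits produced below must be shown to lie in $K$, which is exactly what weak isolation buys. Observe first that each of (d)--(f) already excludes parabolic periodic points --- such a point has a multiplier of modulus $1$, contradicting UHP, and its orbit measure has Lyapunov exponent $0$, contradicting LyapHyp --- so there is no loss in omitting that clause throughout.

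The geometrically straightforward half of the cycle consists of ExpShrink $\Rightarrow$ LyapHyp $\Rightarrow$ UHP together with ExpShrink $\Rightarrow$ CE2$(z_0)$ for a suitable $z_0$. For the first: if some ergodic $\mu$ had $\chi(\mu)<\log\lambda_{\Exp}$, then for $\mu$-typical $x$ the orbit meets neighbourhoods of $\Crit(f)$ only at a summable, slowly shrinking rate (Borel--Cantelli), so the component of $f^{-n}(B(f^n(x),r))$ through $x$ stays within bounded distortion of a disc of radius $\asymp|(f^n)'(x)|^{-1}$, which does not shrink at rate $\lambda_{\Exp}$; contradiction. LyapHyp $\Rightarrow$ UHP is immediate, since the orbit measure of a period-$k$ point $p$ is ergodic invariant and hence $\tfrac1k\log|(f^k)'(p)|\ge\log\lambda_{\Lyap}$. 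For ExpShrink $\Rightarrow$ CE2$(z_0)$ one picks $z_0$ a repelling periodic point, so that pullback components of a small disc about it avoid $\Crit(f)$ up to bounded distortion: any $w\in f^{-n}(z_0)$ lies in a component of diameter $\le\lambda_{\Exp}^{-n}$, whence Koebe gives $|(f^n)'(w)|\ge c\,\lambda_{\Exp}^n$. These implications already yield $\lambda_{\Per}\ge\lambda_{\Lyap}\ge\lambda_{\Exp}$ and, for this $z_0$, $\lambda_{CE2}(z_0)\ge\lambda_{\Exp}$ (suprema over admissible constants).

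To close the cycle I must prove the ``metric $\Leftrightarrow$ topological'' equivalence TCE $\Leftrightarrow$ ExpShrink, the implication CE2$(z_0)\Rightarrow$ ExpShrink (closing via the previous paragraph and giving the reverse constant inequality, so that $\sup_{z_0}\lambda_{CE2}$ coincides with $\sup\lambda_{\Exp}$), and UHP $\Rightarrow$ ExpShrink with enough quantitative control to get $\lambda_{\Exp}\ge\lambda_{\Per}$ and hence part~2. For TCE $\Rightarrow$ ExpShrink: along a pullback $f^{-n}(B(x,r))$ take a good time $n_j\le Pj$ at which the pullback component meets $\Crit(f)$ at most $M$ times, cut the branch at those ``bad'' steps into $\le M+1$ univalent pieces with Koebe distortion, and telescope the Ma\~n\'e-type a~priori shrinking of critical-free pullbacks --- using that the bad steps occupy a vanishing proportion --- into a uniform exponential rate; the reverse direction, and CE2$(z_0)\Rightarrow$ ExpShrink, spread the topological capture bound, resp.\ the exponential growth of the derivative along the single backward orbit of $z_0$, to all pullback components by a covering-and-distortion argument. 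For UHP $\Rightarrow$ ExpShrink one argues by contradiction: a pullback component that fails to shrink exponentially yields, by a Pliss-type selection within the corresponding orbit segment together with a fixed-point (shadowing) argument, a periodic point $p\in K$ of some period $k$ with $|(f^k)'(p)|<\lambda_{\Per}^{k}$, contradicting UHP; following the rates through this construction also produces $\lambda_{\Exp}\ge\lambda_{\Per}$. This manufacture of periodic orbits out of purely geometric non-shrinking data, and the attendant matching of all four constants, is the main obstacle; in the real case it is precisely here that weak isolation enters, to guarantee $p\in K$.

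For part~3, CE $\Rightarrow$ TCE and CE2 $\Rightarrow$ TCE hold because exponential growth of $|(f^n)'(f(c))|$, resp.\ of $|(f^n)'|$ on $f^{-n}(c)$, forces the forward, resp.\ backward, orbit of each critical point out of a small neighbourhood of $\Crit(f)$ exponentially fast, so that when $B(f^{n_j}(x),r)$ is pulled back along the orbit of $x$ a critical point can re-enter the growing component only boundedly often; with the Ma\~n\'e a~priori shrinking this gives the required counting bound, and part~1 then delivers (c)--(f). For part~4, when $\Crit(f)\cap J(f)=\{c\}$ (or $f$ is $S$-unimodal on $K=I$), the forward and backward Collet--Eckmann conditions become linked: the recurrence of the orbit of $c$ back to $c$ simultaneously governs $|(f^n)'(f(c))|$ and the multipliers of all backward branches, so CE, CE2 and the TCE-type conditions all coincide. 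Here one invokes the Nowicki--Sands type analysis, in which negativity of the Schwarzian (real case) or the absence of further critical points (complex case) supplies the distortion control for the long univalent pullbacks.
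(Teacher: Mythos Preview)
Your outline matches the paper's own sketch closely: the paper also proves the equivalence by the cycle
\[
\text{CE2}(z_0)\Rightarrow\text{ExpShrink}\Rightarrow\text{LyapHyp}\Rightarrow\text{UHP}\Rightarrow\text{CE2}(z_0),
\]
together with ExpShrink $\Leftrightarrow$ TCE handled separately, and then cites \cite{PrzRoh1,NP} for CE $\Rightarrow$ TCE and \cite{P-Holder,NowSands} for the converse in the unicritical case. The one genuine rerouting is the closing step: the paper closes via UHP $\Rightarrow$ CE2$(z_0)$, by shadowing a backward orbit of $z_0$ with a periodic orbit and invoking UHP on that orbit, whereas you close via UHP $\Rightarrow$ ExpShrink, by extracting (Pliss $+$ shadowing) a periodic orbit out of a hypothetical slowly-shrinking pullback. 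Both arguments are ``shadowing'' at heart and both carry the constants, so part~2 comes out either way; the paper's route is marginally shorter because it produces a single backward orbit rather than a whole pullback component to shadow.

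Two points worth tightening. First, the paper singles out as a key technical input the average-distance estimate (Lemma~\ref{av_dist}), $\sum'_{j=0}^{n}-\log|f^j(x)-c|\le Qn$; this is what makes the ``Ma\~n\'e-type a~priori shrinking'' and the distortion bookkeeping in TCE $\Leftrightarrow$ ExpShrink go through, and you should name it rather than allude to it. Second, in your ExpShrink $\Rightarrow$ CE2$(z_0)$ step, choosing $z_0$ repelling periodic is not quite enough: if $z_0$ lies in the forward orbit of some critical point then certain $w\in f^{-n}(z_0)$ have $(f^n)'(w)=0$. You need $z_0$ outside the post-critical set (e.g.\ a ``safe'' point in the sense of Definition~\ref{safe}); then ExpShrink plus bounded-degree pullbacks (via TCE) and Koebe give the derivative bound.
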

For more details, see \cite{PrzRivSmi:03}, \cite{Riv:12} and \cite{PrzRiv:14}.

\begin{definition}\label{weak-isolation}
 $(f,K)\in\cA$ is said to be {\it weakly isolated}
if there exists
 an open neighbourhood $U$ of $K$ in the domain of $f$
such that for every  $f$-periodic orbit $O(p)\subset U$ is contained in $K$.
\end{definition}

In the complex case, we can replace \eqref{TCE} by
$$
\deg \Bigl (f^{n_j} \bigl |_{\Comp_x f^{-n_j}(B(f^{n_j}(x),r))   } \Bigr )
 \le  M'
 $$
 for a constant $M'$. In the real case, this condition is weaker than \eqref{TCE} since $f$ mapping $W_{n+1}$ into $W_n$ may happen not surjective. It can have folds, thus
 truncating backward trajectories of critical points acquired before when pulling back.

\smallskip

 In the real case, the proof of CE$\Rightarrow$TCE can be found in \cite{NP}. For the complex case, we refer the reader to  \cite{PrzRoh1}.

 \smallskip

The implication TCE$\Rightarrow$CE was proved in the complex case in \cite[Theorem 4.1]{P-Holder}. The proof used the idea of the ``reversed telescope'' by \cite{GraSmi1}.
In the real case, this implication was proved  for $S$-unimodal maps  in \cite{NowSands}.
In presence of more than one critical point this implication may be false, see \cite[Appendix C]{PrzRivSmi:03}.


\smallskip

{\bf Question.} Is this implication true for every $(f,K)\in\cA_+^{\BD}$ with one critical point, provided it is weakly isolated? See Definition \ref{weak-isolation}. It seems that the answer is yes.

\smallskip

Since the condition TCE is stated in purely topological terms (in the class of maps without indifferent periodic orbits), it is invariant under topological conjugacy.
	So we obtain the following immediate corollary.

\begin{corollary}
	All equivalent conditions listed above are invariant under topological conjugacies between $(f,K)$'s).
\end{corollary}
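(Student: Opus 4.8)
The plan is to derive this corollary directly from the preceding remarks about the purely topological nature of the condition TCE, together with the equivalences assembled in Theorem~\ref{Nonunifhyp}. First I would recall precisely what is being claimed: if $h\colon K\to K'$ is a homeomorphism conjugating $(f,K)$ to $(g,K')$, i.e.\ $h\circ f = g\circ h$ on $K$, and if $(f,K)$ satisfies one (hence, by Theorem~\ref{Nonunifhyp}, all) of the listed conditions, then $(g,K')$ satisfies them as well. The strategy is to pass through TCE as the ``hub'' of the equivalence class: show $(f,K)$ TCE $\iff$ $(g,K')$ TCE, and then invoke Theorem~\ref{Nonunifhyp} on each side to move to any of the other conditions.

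The key step is the observation, already emphasized in the text, that TCE is formulated entirely in terms of the dynamics of $f$ on $K$ and the combinatorics of components of preimages of small neighbourhoods, with no reference to the derivative, the conformal structure, or the metric beyond the notion of ``ball''. More precisely, the quantifier structure of \eqref{TCE} is: there exist $M,P,r$ such that for every $x\in K$ there is a sparse sequence $n_j\le Pj$ along which at most $M$ of the pullback components $\Comp_{f^i(x)}f^{-(n_j-i)}B(f^{n_j}(x),r)$ meet $\Crit(f)$. Under a topological conjugacy $h$, the image $h$ of a connected set is connected, $h$ carries components of $f^{-m}(V)$ to components of $g^{-m}(h(V))$ because $h$ intertwines the two maps, and $h$ carries $\Crit(f)\cap K$ to $\Crit(g)\cap K'$ --- here one must note that ``being a critical point'' is not itself topological, but the \emph{finite set} $\Crit(f)\cap K$ is simply transported by $h$ to \emph{some} finite set $S'\subset K'$, and the combinatorial condition \eqref{TCE} with $\Crit(f)$ replaced by $S'$ is exactly what $h$ transfers; one then checks that having \eqref{TCE} for the true critical set $\Crit(g)\cap K'$ is equivalent to having it for any finite set containing it, up to adjusting $M$ and using that there are finitely many critical points. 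The only genuinely delicate point is that \eqref{TCE} refers to metric balls $B(f^{n_j}(x),r)$, and $h$ is merely a homeomorphism, not bi-Lipschitz, so the $r$-ball downstairs is not literally an $r'$-ball upstairs. This is resolved by observing that TCE is known to be equivalent to its own statement with ``disc of radius $r$'' replaced by ``sufficiently small connected neighbourhood of the point, chosen from a fixed finite cover'' --- equivalently one uses that the condition is robust under shrinking $r$ and that, since $K$ is compact, $h$ and $h^{-1}$ are uniformly continuous, so a small ball upstairs contains and is contained in $h$-images of small balls downstairs; the nesting and the bound $M$ on the number of critical pullbacks are preserved by such comparisons because they only involve which component contains which point, not its size. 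We also use the hypothesis, standing throughout this section, that the maps have no parabolic (indifferent) periodic orbits, so that TCE as stated carries the ``no parabolics'' clause automatically on both sides; this is why the corollary is phrased for the class where that clause is vacuous.

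With TCE shown to be a conjugacy invariant, the corollary follows: given $(f,K)$ satisfying any one of (b)--(f) (for (b), ``for some $z_0$''), Theorem~\ref{Nonunifhyp}(1) gives that $(f,K)$ satisfies TCE; the argument above transfers TCE to $(g,K')$; and Theorem~\ref{Nonunifhyp}(1) applied to $(g,K')$ gives back all of (b)--(f) for $(g,K')$. In the real case one additionally needs weak isolation to run Theorem~\ref{Nonunifhyp}, and one should note that weak isolation of $(f,K)$ does transfer to $(g,K')$ under a topological conjugacy defined on a neighbourhood, since it is a statement purely about periodic orbits near $K$ lying in $K$.

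I expect the main obstacle to be exactly the metric-versus-topological gap in \eqref{TCE}: making rigorous that the ``disc of radius $r$'' formulation of TCE can be replaced by a formulation using a fixed finite open cover of $K$ (so that it becomes manifestly topological), and checking that this reformulated condition is genuinely equivalent to the original one. This is essentially the content of the remarks in the text that TCE is ``stated in purely topological terms,'' and the cleanest write-up would either cite the equivalence of these formulations from \cite{PrzRivSmi:03} or spell out the short compactness argument: cover $K$ by finitely many balls $B(x_k, r/2)$, note every ball $B(y,r')$ with $r'$ small lies in some $B(x_k,r)$ and contains some $B(x_k, r/4)$, and transfer the component-counting condition between these nested neighbourhoods at the cost of absorbing a bounded multiplicative loss into the constants $M$ and $P$.
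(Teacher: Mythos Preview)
Your proposal is correct and follows the same route as the paper: use TCE as the topologically-formulated ``hub'' and then invoke Theorem~\ref{Nonunifhyp} on both sides. The paper's own argument is the single sentence preceding the Corollary (``Since the condition TCE is stated in purely topological terms \dots''), so your write-up is simply a more detailed unpacking of that remark.

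One simplification worth noting: your handling of $\Crit(f)$ is more laborious than necessary. In both the complex and the real setting the points that actually matter in \eqref{TCE} are the \emph{branch points}, i.e.\ the points where $f$ fails to be a local homeomorphism (in the real case these are exactly the turning critical points; inflection critical points, being local homeomorphisms, do not affect which component of a pullback contains which point). Branch points are characterized purely topologically, so a conjugacy $h$ carries them to branch points of $g$ on the nose, and no auxiliary finite set $S'$ or ``TCE for any finite set containing $\Crit$'' argument is needed.
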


Another proof of the topological invariance of CE in the complex case was provided in \cite{PrzRoh2} with the use of Heinonen and Koskela criterion for quasi-conformality, \cite{HeiKos:95}.

Note that this topological invariance is surprising, as all the conditions except TCE are expressed in geometric-differential terms. I do not know how to express CE for unimodal maps of interval in the (topological-combinatorial)
kneading sequence terms.

\smallskip

An important lemma used here has been an estimate of an average distance in the logarithmic scale of every orbit from $\Crit(f)$, see \cite{DPU}. Namely
\begin{lemma}\label{av_dist}
\begin{equation}\label{average distance}
	{\sum_{j=0}^n}{}'  -\log |f^j(x)-c| \le Qn
\end{equation}
for a constant $Q>0$ every $c\in\Crit(f)$, every $x\in K$ and every integer
$n>0$. $\Sigma'$ means that we omit in the sum an index $j$ of smallest distance  $|f^j(x)-c|$.
\end{lemma}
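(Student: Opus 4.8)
The plan is to exploit the transitivity of $f$ on $K$ together with the non-flatness of the critical points to reduce the bound to a finite-time estimate near each $c$, and then to control the number of visits of an orbit to a small neighborhood of $c$. Fix $c\in\Crit(f)$. Choose $\delta>0$ small enough that on $B(c,\delta)$ the map $f$ behaves like $z\mapsto a(z-c)^{\ell}+f(c)$ up to bounded distortion (non-flatness at the turning/inflection critical points is exactly what supplies this $\ell<\infty$ and the comparability $|f(x)-f(c)|\approx|x-c|^{\ell}$ on $B(c,\delta)$). For $x\in K$ and $0\le j\le n$, split the sum $\sum'_{j}-\log|f^j(x)-c|$ into the indices with $|f^j(x)-c|\ge\delta$, which contribute at most $n\cdot(-\log\delta)^{+}$ and are harmless, and the indices with $|f^j(x)-c|<\delta$, which are the only ones that can make the sum large.

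First I would show there is a constant $N_0$ (depending on $\delta$ and on $f$, not on $x$ or $n$) bounding the number of indices $j\le n$ with $f^j(x)\in B(c,\delta)$, \emph{after deleting the single closest one}. The mechanism: if $f^j(x)$ and $f^{j'}(x)$ with $j<j'$ both lie very deep in $B(c,\delta)$, then by non-flatness $f^{j+1}(x)=f(f^j(x))$ is extremely close to $f(c)$, and bounded distortion for iterates (the BD hypothesis, or $C^3$ plus hyperbolicity of periodic orbits, which forces an expansion estimate of the kind in Lemma~\ref{av_dist}'s sources \cite{DPU}) propagates this closeness forward: the piece of orbit from $j+1$ to $j'$ would have to be close to the corresponding forward orbit of $f(c)$, and a recurrence that deep cannot happen too often without forcing $c$ itself to be (almost) periodic, contradicting either the BD bounds or the absence of indifferent orbits. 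Quantitatively, one shows that among the "deep" visits at most one can be within distance $e^{-Rn}$ of $c$ for a fixed large $R$, and all the remaining deep visits satisfy a uniform lower bound $|f^j(x)-c|\ge\eta$ for some $\eta=\eta(f,\delta)>0$; those then contribute at most $n\cdot(-\log\eta)$.

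Putting the pieces together: the omitted index absorbs the single closest return (which is the only term with no a priori lower bound), every other term is bounded below by $\min(\delta,\eta)$, so $\sum'_{j=0}^{n}-\log|f^j(x)-c|\le n\cdot\bigl(-\log\min(\delta,\eta)\bigr)^{+}=:Qn$, with $Q$ independent of $c$ (there are finitely many critical points, take the maximum), $x$, and $n$. The main obstacle is the quantitative step controlling deep returns: turning the heuristic "an orbit cannot shadow $f(c)$ for a long time and then return very deep into $B(c,\delta)$ again" into a uniform combinatorial bound $N_0$ on the number of deep visits. This is exactly where bounded distortion for iterates (BD) is used in an essential way — it lets one pull the closeness of $f^{j+1}(x)$ to $f(c)$ back and forth along the orbit without losing constants — and it is the place where, without BD, one would need to invoke instead the $C^3$ plus hyperbolic-periodic-orbits hypotheses and a Koebe-type distortion argument. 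I would carry out this step by a pigeonhole argument over dyadic shells $\{e^{-(k+1)}\le|f^j(x)-c|<e^{-k}\}$ around $c$, showing each shell is visited a bounded number of times.
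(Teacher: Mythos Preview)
The paper does not itself prove this lemma; it only states it and cites \cite{DPU}. So there is no in-paper proof to match, but your proposal does not reproduce the known argument and, more to the point, rests on assertions that are simply false.

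Your core claims fail whenever $c\in K$, which is the only nontrivial case. You assert that after deleting the single closest visit there is a constant $N_0$, \emph{independent of $n$}, bounding the number of $j\le n$ with $f^j(x)\in B(c,\delta)$; and then that every remaining term satisfies $|f^j(x)-c|\ge\eta$ for a fixed $\eta>0$. Neither is true: by topological transitivity on $K$ there are orbits visiting $B(c,\delta)$ with positive frequency, so the count of close visits grows linearly in $n$; and for any fixed $\eta>0$ there are orbits entering $B(c,\eta)$ two, three, arbitrarily many times. The dyadic-shell variant at the end (``each shell is visited a bounded number of times'') fails for shallow shells for the same reason. Your appeal to BD or to the absence of indifferent cycles is also misplaced: the result in \cite{DPU} is for \emph{every} rational map of degree $\ge2$ on its Julia set, with no expansion, no distortion, and no hypothesis on periodic orbits. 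Note finally that ``at most one visit within $e^{-Rn}$ of $c$'' is a \emph{consequence} of the lemma (take $R>Q$), not a step toward proving it.

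The actual mechanism is a depth--time trade-off, and the only ingredient is the elementary Lipschitz bound $|f(y)-f(z)|\le M|y-z|$ with $M=\sup|f'|$. If $|f^j(x)-c|=r$ then $|f^{j+i}(x)-f^i(c)|\le M^i r$ for all $i\ge0$; applied between two close approaches at times $j<j'$ this gives
\[
|f^{\,j'-j}(c)-c|\;\le\;|f^{j'}(x)-c|+M^{\,j'-j}\,|f^{j}(x)-c|,
\]
which (since $c$ is not periodic) ties $-\log|f^j(x)-c|$ to the gap $j'-j$ and to the recurrence $|f^m(c)-c|$ of the critical orbit. The argument in \cite{DPU} organizes these inequalities along the whole orbit segment to conclude that the \emph{sum} of the depths, minus the single largest one, is $O(n)$ --- not because deep visits are rare, but because each one necessarily consumes time proportional to its own depth.
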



An order of proving the equivalences in Theorem \ref{Nonunifhyp} is

\noindent CE2$(z_0)\Rightarrow$ExpShrink$\Rightarrow$LyapHyp$\Rightarrow$UHP$\Rightarrow$CE2$(z_0)$ and separately

\noindent ExpShrink$\Leftrightarrow$TCE.
E.g. assumed UHP one proves CE2$(z_0)$ by ``shadowing'', compare the beginning
of Section \ref{sec:other}.

\section{Geometric pressure and equilibrium states}  

We go back to topological pressure, Definition \ref{top_pres}, but for $\phi=-t\log |f'|$, $t\in \mathbb{R}$ in the complex $K=J(f)$ or real cases, where $\phi$ can attain the values $\pm\infty$ at the critical points of $f$. See the beginning of Section \ref{dim1}. We call it the geometric pressure, because it is useful in studying of geometry of the underlying space, e.g.~as in \eqref{HDformula} via equilibrium states for all $t$.

The definition of $P_{\var}(f, -t\log|f'|)$ in  Definition \ref{top_pres} makes sense due to $\chi(\mu)\ge 0$ for all $\mu\in\cM(f)$, in particular due to the integrability of $\log |f'|$, see
 \cite{Prz:93} and \cite[Appendix A]{Riv:12} for a simpler proof. We conclude that it is convex and monotone decreasing. We start by defining a quantity  occurring equal to $P(t)=P_{\var}(t):=P_{\var}(f,-t\log|f'|)$, to explain its geometric meaning, compare with Section \ref{dim1}.

 \begin{definition}[Hyperbolic pressure]\label{hyperbolic pressure}
$$
P_{\hyp}(t):= \sup_{X\in\cH(f,K)} P(f|_X,-t\log|f'|) ,
$$
where $\cH(f,K)$ is defined as the space of all compact forward $f$-invariant (that is $f(X)\subset X$)
hyperbolic subsets of $K$, repellers in $\mathbb{R}$.
\end{definition}

From this definition, it immediately follows that:


\begin{proposition}\label{hypdim}{\rm (Generalized Bowen's formula, compare \eqref{HDformula})}
The first zero $t_0$ of $t\mapsto P_{\hyp}(K,t)$ is equal
to the hyperbolic dimension $\HD_{\hyp} (K)$ of $K$, defined by
$
\HD_{\hyp} (K):=\sup_{X\in\cH(f,K)} \HD(X).
$
\end{proposition}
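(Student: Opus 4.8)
The plan is to unwind both definitions and show a two-sided inequality $t_0 = \HD_{\hyp}(K)$ by comparing, hyperbolic set by hyperbolic set, the first zero of $t\mapsto P(f|_X, -t\log|f'|)$ with $\HD(X)$, and then passing to the supremum. First I would fix an arbitrary $X\in\cH(f,K)$. Since $X$ is a compact forward-invariant hyperbolic repeller (in the real case) or expanding repeller (in the complex case), the map $f|_X$ is distance expanding, so $|f'|$ is bounded below by some $\la>1$ on $X$ and $\log|f'|$ is H\"older continuous there (using $f\in C^{1+\e}$ or holomorphic). Hence Theorem~\ref{Gibbs} and the classical Bowen's formula from Section~\ref{dim1} apply: the function $t\mapsto P(f|_X,-t\log|f'|)=:P_X(t)$ is finite, continuous, strictly decreasing (because $\chi(\mu)\ge\log\la>0$ uniformly), with $P_X(0)=h_{\mathrm{top}}(f|_X)\ge 0$ and $P_X(t)\to -\infty$ as $t\to+\infty$; so it has a unique zero $t_X$, and by \eqref{Bowen}--\eqref{HDformula} applied to the expanding repeller $X$ we get $t_X=\HD(X)$.

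Next I would establish monotonicity of $t_X$ in $X$ and relate it to $P_{\hyp}$. Since $P_{\hyp}(t)=\sup_{X}P_X(t)$ is a supremum of finite convex decreasing functions, it is convex and decreasing; moreover $P_{\hyp}$ is finite for $t$ large (each $P_X(t)\le h_{\mathrm{top}}(f|_X) - t\log\la \le h_{\mathrm{top}}(f) - t\log\la$, using a uniform lower bound $\la$ on expansion over all hyperbolic subsets — or, if no such uniform bound exists, one argues via $P_{\hyp}(t)\le P_{\var}(f,-t\log|f'|)<\infty$). Then: $P_{\hyp}(t)>0$ iff $P_X(t)>0$ for some $X$ iff $t<t_X$ for some $X$ iff $t<\sup_X t_X=\sup_X\HD(X)=\HD_{\hyp}(K)$. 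Taking the infimum of $t$ with $P_{\hyp}(t)\le 0$, the first zero $t_0$ of $P_{\hyp}$ equals $\sup_X t_X=\HD_{\hyp}(K)$. One small point to handle carefully: whether the supremum $\sup_X P_X(t)$ is actually attained or only approached — but since we only need the location of the zero and we have genuine inequalities $P_{\hyp}(t)>0$ for $t<\HD_{\hyp}(K)$ and $P_{\hyp}(t)\le 0$ for $t\ge\HD_{\hyp}(K)$, attainment is not required; continuity of $P_{\hyp}$ (from convexity and finiteness on an interval) gives $P_{\hyp}(t_0)=0$.

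The main obstacle is the direction $P_{\hyp}(\HD_{\hyp}(K))\le 0$, i.e.\ ruling out that $P_{\hyp}$ stays strictly positive at $t=\HD_{\hyp}(K)$. Equivalently, one must show there is no single hyperbolic $X$ with $\HD(X)=t_X>\HD_{\hyp}(K)$, which is immediate from the definition of $\HD_{\hyp}$ as a supremum, together with the identity $t_X=\HD(X)$; so this is really bookkeeping once Bowen's formula for each $X$ is in hand. The genuinely substantive input — finiteness and non-vanishing Hausdorff measure in dimension $t_X$ for an expanding repeller, i.e.\ the classical Bowen formula \eqref{HDformula} — is already quoted in Section~\ref{dim1}, so in this survey setting the proposition "immediately follows," and my write-up would simply record the equivalence of the three events $\{P_{\hyp}(t)>0\}$, $\{\exists X:\ t<\HD(X)\}$, $\{t<\HD_{\hyp}(K)\}$ and conclude.
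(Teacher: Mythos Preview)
Your proposal is correct and is precisely the argument the paper has in mind: the paper gives no proof beyond the phrase ``From this definition, it immediately follows that,'' and your write-up is a faithful unpacking of that sentence, reducing everything to the classical Bowen formula \eqref{HDformula} for each individual $X\in\cH(f,K)$ and then taking the supremum. Your handling of the non-attainment issue via convexity/continuity is the right way to close the one genuine gap in the word ``immediately.''
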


For the discussion $\HD_{\hyp}(J(f))$ vs $\HD(J(f))$, see \cite[Section 2.13]{Lyubich_ICM14}.

\medskip

Below we state Theorem \ref{equi2} proved in \cite{PrzRiv:11} in the complex setting and in
\cite{PrzRiv:14} in the real setting.
It extends \cite{BT1, PS} and \cite{IT}. See also impressive \cite{DT}.

\begin{figure}
\begin{minipage}[c]{\linewidth}
\centering
\begin{overpic}[scale=.35]{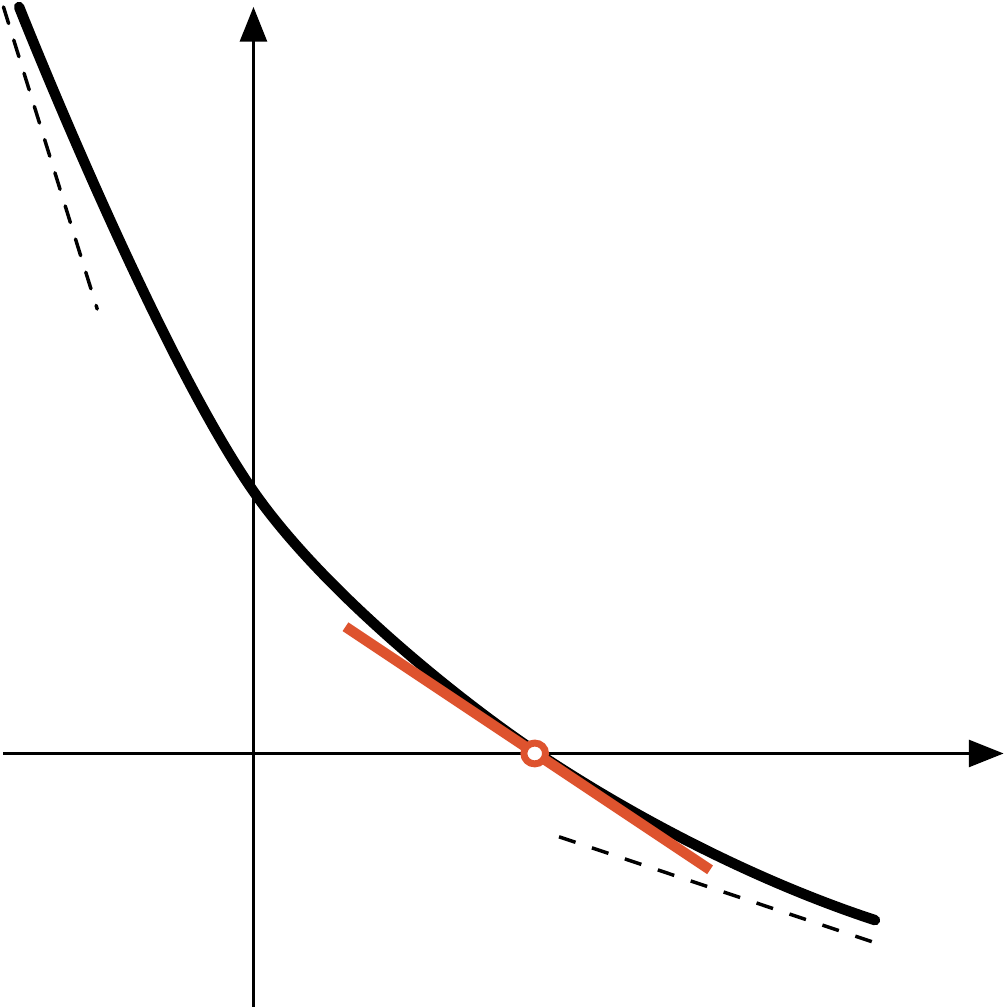}
        \put(102,23){\small$t$}
        \put(52,32){\small$t_0$}
        \put(30,93){\small$P(t)$}
        \put(-15,63){\tiny$-\chi_{\rm sup}$}
        \put(50,4){\tiny$-\chi_{\rm inf}$}
 \end{overpic}
 \hspace{0.5cm}
 \begin{overpic}[scale=.35]{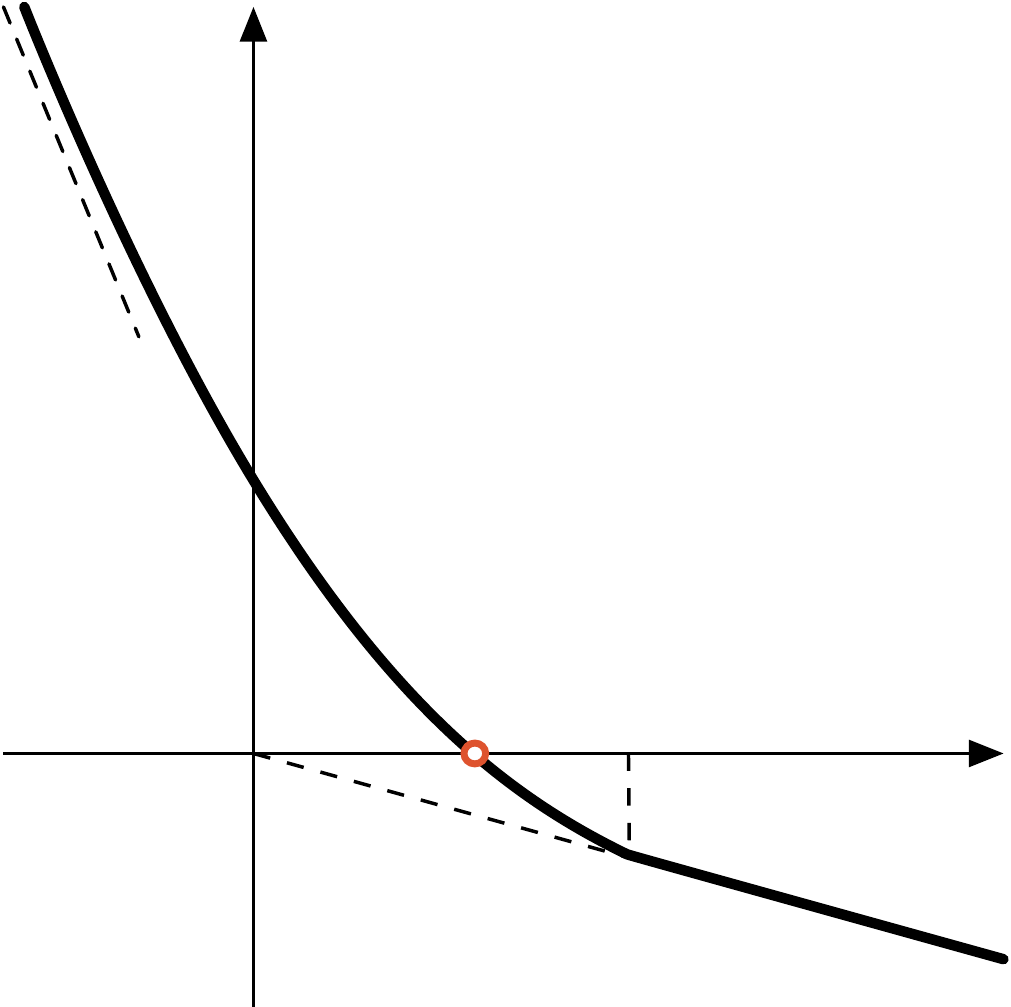}
        \put(102,23){\small$t$}
        \put(44,32){\small$t_0$}
        \put(60,32){\small$t_+$}
        \put(30,93){\small$P(t)$}
        \put(-15,63){\tiny$-\chi_{\rm sup}$}
        \put(60,4){\tiny$-\chi_{\rm inf}$}
\end{overpic}
 \hspace{0.5cm}
\begin{overpic}[scale=.35]{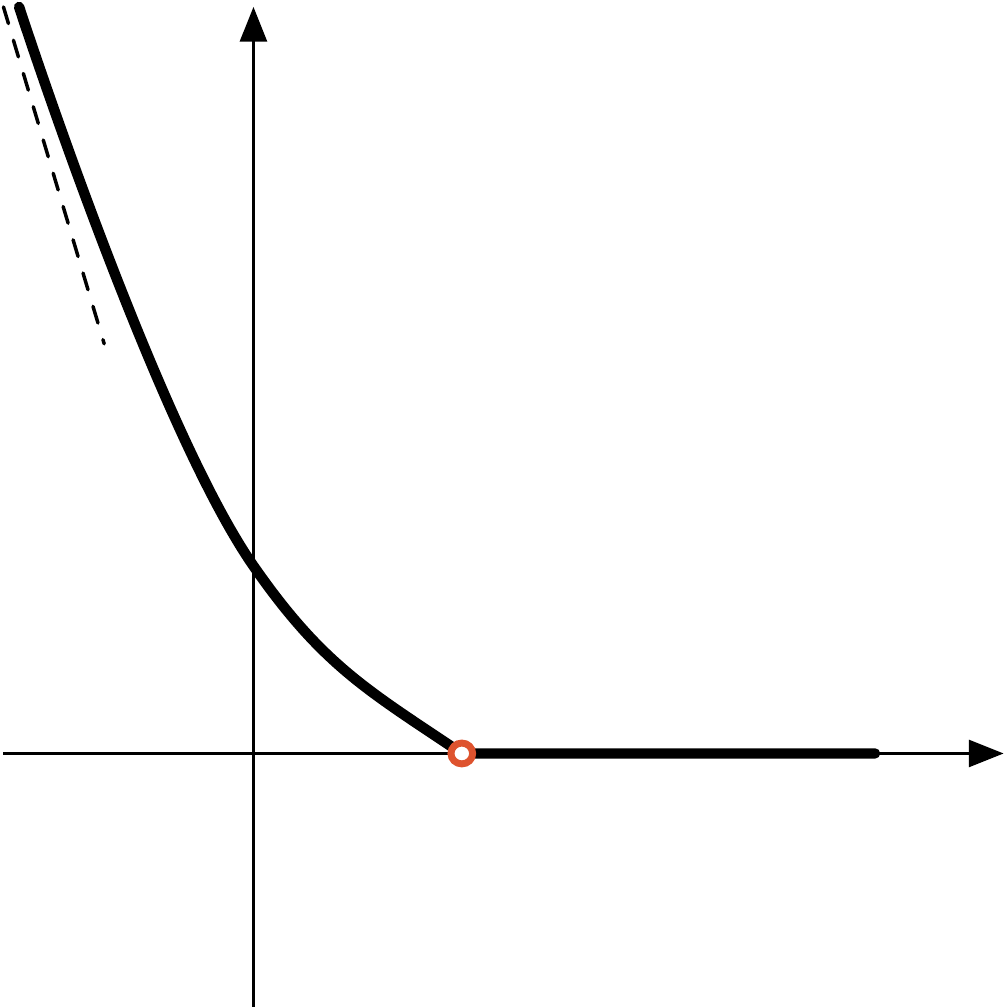}
        \put(102,23){\small$t$}
        \put(44,32){\small$t_0=t_+$}
        \put(30,93){\small$P(t)$}
        \put(-15,63){\tiny$-\chi_{\rm sup}$}
        \put(50,15){\tiny$-\chi_{\rm inf}$}
\end{overpic}
\caption{The geometric pressure: LyapHyp with $t_+=\infty$, \newline LyapHyp with $t_+<\infty$, and non-LyapHyp. This Figure is taken from \cite{GPR2}, see notation in Remarks below.}
\label{geometric-pressure}
\end{minipage}
\end{figure}



\begin{theorem}\label{equi2}

1. Real case, \cite{PrzRiv:14}. Let
$(f,K)\in {\sA}_+^3$
and let all $f$-periodic orbits in $K$ be hyperbolic repelling.
Then $P(t)$ is real analytic on the open interval bounded by the ``phase transition parameters'' $t_-$ and  $t_+$. For every $t\in (t_-,t_+)$,
the domain where
\begin{equation}\label{hyplog}
P(t)>\sup_{\nu\in\cM(f)} -t\int\log|f'|\, d\nu,
\end{equation}
there is a unique invariant  equilibrium state. It is ergodic and absolutely continuous with respect to an adequate conformal measure $m_{\phi_t}$ with the density bounded from below by a positive constant almost everywhere.
If furthermore $f$ is topologically exact on $K$ (that is for every $V$ an open subset of $K$ there exists $n\ge 0$ such that $f^n(V)=K$), then this measure is mixing, has exponential decay of correlations and it satisfies the Central Limit Theorem for Lipschitz gauge functions.

\smallskip

2. Complex case, \cite{PrzRiv:11}. The assertion is the same. One assumes a very weak expansion:  the existence of arbitrarily small nice, or pleasant, couples and hyperbolicity away from critical points.

\end{theorem}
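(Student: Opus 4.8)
The plan is to follow the strategy of thermodynamic formalism via an inducing (first-return) scheme, exactly in the spirit of Remark \ref{inducing}, and to run the complex and real cases in parallel, flagging the places where the very weak expansion hypotheses of part 2 (existence of arbitrarily small nice/pleasant couples together with hyperbolicity away from $\Crit(f)$) replace the $C^3$ plus hyperbolic-repelling-periodic-orbits hypothesis of part 1. First I would fix $t$ in the relevant interval, i.e.\ in the interior of $\{t : P(t) > \sup_{\nu\in\cM(f)} -t\int\log|f'|\,d\nu\}$; the point of this ``hyperbolicity'' window is precisely that, by Condition 5) of Section \ref{Hyperbolic potential} translated to the geometric potential (no ergodic equilibrium state can concentrate on a set of zero Lyapunov exponent), the potential $-t\log|f'|$ behaves like a hyperbolic H\"older potential even though it has singularities at critical points. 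I would then build a nice set (or nice couple) of arbitrarily small diameter around a suitably chosen point of $K$, define the induced map $F=f^{n(x)}$ to this set as in \cite{SzoUrbZdu:15}, and verify that the induced system is a (countable-alphabet) uniformly expanding Markov map with bounded distortion — here Lemma \ref{av_dist} and the BD property (resp.\ $C^3$ plus non-flatness) are what keep distortion under control along the excursions between critical points.

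The core of the construction is then a standard but delicate application of the thermodynamic formalism for countable Markov shifts (Mauldin–Urbański / Sarig style). I would show that the induced potential $-t S_{n(x)}\log|f'| - n(x)P(t)$ is summable — the key quantitative input being the exponential tail estimate $\mu_{\phi_t}(A_n)\lesssim \theta^n$ for $A_n=\{x\in A : n(x)\ge n\}$, which for the geometric potential follows from the non-concentration of pressure on low-exponent measures, i.e.\ from the fact that $t$ lies strictly inside the hyperbolic window (this is where the ``$P(t) > \sup -t\int\log|f'|\,d\nu$'' assumption is essential rather than cosmetic). Summability produces a unique conformal measure $m_{F}$ and a unique invariant density $\rho=u_{\phi_t}$ for the induced transfer operator, bounded below by a positive constant by the Perron–Frobenius argument sketched at the end of Section \ref{Introduction}; one then spreads $m_F$ and $\rho\,m_F$ back by the standard pull-back/push-forward formula $\mu_{\phi_t}=\sum_{j} (f^j)_*(\rho\,m_F|_{\{n(x)>j\}})$ (normalized), checks that the total mass is finite (again the exponential tail), and that the resulting $\mu_{\phi_t}$ is $f$-invariant, ergodic, and absolutely continuous with respect to the global $\phi_t$-conformal measure with density bounded below a.e. Uniqueness of the equilibrium state is obtained by the usual argument that any ergodic equilibrium state charges the nice set, induces to an equilibrium state of the induced system, and hence coincides with $\rho\,m_F$ after the return-time reconstruction (Abramov's formula matching entropies and integrals).

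For the analyticity statement I would invoke real-analyticity of the leading eigenvalue of the induced transfer operator as a function of the parameter $t$ (Kato perturbation theory / analytic dependence of Gibbs states for summable locally Hölder potentials on countable Markov shifts), and then transfer this to $P(t)$ via the implicit relation $P(\text{induced potential at }P(t),t)=0$: since the derivative in the $P(t)$-slot is nonzero (it equals $-\int n(x)\,d(\rho m_F)<0$), the implicit function theorem gives analyticity of $t\mapsto P(t)$ on the open interval where the induced system stays summable, which is exactly $(t_-,t_+)$. Finally, under topological exactness of $f$ on $K$, I would upgrade to mixing and exponential decay of correlations by showing the induced system is mixing with an exponential return-time tail, hence (e.g.\ by Young-tower / renewal-theory estimates, or the exponential-convergence inducing argument of \cite{RivLi1,RivLi2}) the original system has exponential decay of correlations for Hölder/Lipschitz observables, and the Central Limit Theorem follows by the Gordin–Liverani martingale method or directly from spectral-gap of the transfer operator on a suitable Banach space.

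The main obstacle I expect is the verification of the exponential tail estimate $\mu_{\phi_t}(A_n)\le C\theta^n$ in the complex case under only the ``arbitrarily small nice couples plus hyperbolicity away from $\Crit(f)$'' hypothesis: one has to control excursions of orbits that come very close to critical points without any a priori expansion along the critical orbits, using only the conformality of the reference measure, the geometry of nice couples (so that pull-backs along returns have bounded degree), and the logarithmic-average estimate of Lemma \ref{av_dist} to bound $-t S_{n}\log|f'|$ from above along long returns. Making the constants uniform and genuinely exponential — rather than merely summable — is where the hardest work lies, and it is precisely the step that forces the strict inequality $P(t)>\sup_\nu -t\int\log|f'|\,d\nu$ into the hypothesis; in the real case the analogous difficulty is handled by the $C^3$/BD distortion control together with weak isolation ruling out bad periodic behaviour near $K$.
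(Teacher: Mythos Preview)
Your overall strategy matches the paper's approach as sketched in Remark \ref{ind2}: induce to a nice/pleasant set, run countable-Markov-shift thermodynamic formalism (Mauldin--Urba\'nski/Sarig), spread back via a Young tower, and deduce analyticity of $P(t)$ from the implicit function theorem applied to a two-parameter induced pressure (the idea of \cite{StratUrb}). However, there is a genuine circularity in your argument that the paper explicitly warns against. You phrase the key quantitative input as the tail estimate $\mu_{\phi_t}(A_n)\lesssim\theta^n$ and cite \cite{SzoUrbZdu:15} for the inducing construction, but $\mu_{\phi_t}$ is precisely the object you are trying to build --- it is not available a priori. As Remark \ref{ind2} states: ``unlike in \cite{SzoUrbZdu:15}, we do not use in the construction of this set the equilibrium measure $\mu_\phi$ because we do not know a priori that it exists. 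The construction is geometric.''

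What must actually be proved first is a purely geometric summability estimate: for each $n$ one bounds the partition sum $\sum \exp\bigl(S_{n(x)}\phi_t - n(x)P(t)\bigr)$ over inverse branches of $F$ with return time $n(x)=n$, showing exponential decay in $n$. This comes directly from the structure of the nice/pleasant couple (pull-backs of bounded criticality, Koebe distortion), the strict pressure gap $P(t)>\sup_\nu(-t\chi(\nu))$ (used as a gap, not as a measure-theoretic tail), and the slow-recurrence control of Lemma \ref{av_dist}. Only once this summability is in hand does one obtain the conformal measure and density for the induced system, build the tower, and project to get $\mu_{\phi_t}$; the exponential decay of $\mu_{\phi_t}(A_n)$ is then a \emph{consequence}, not an input. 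Your last paragraph half-acknowledges this by speaking of ``the conformality of the reference measure'', but the body of the argument has the logic inverted.

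One minor slip: weak isolation (Definition \ref{weak-isolation}) is a hypothesis for the equivalences in Theorem \ref{Nonunifhyp}, not for Theorem \ref{equi2}. In the real case here the hypothesis is $(f,K)\in\sA_+^3$ with all periodic orbits in $K$ hyperbolic repelling, which (after the correction described in Section \ref{dim1}) yields $(f,K)\in\sA_+^{\BD}$ and provides the needed distortion control directly.
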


{\bf Remarks.}
1) $t_-$ and $t_+$ are called the phase transition parameters. Since $P(0)=h_{\rm top}(f)>0$,  $t_-<0<t_+$, they need not exist; we say then they are equal to $-\infty$ and/or $+\infty$ respectively.  $P(t)$ is linear to the left of $t_-$ and to the right of $t_+$, equal to $t\mapsto -t\chi_{\sup}$ where $\chi_{\sup}:=\sup_\nu \chi(\nu)$ and
$t\mapsto -t\chi_{\inf}$, where $\chi_{\inf}:=\inf_\nu\chi(\nu)$, respectively.  Of course, $P(t)$ is not real-analytic at finite $t_-$ and $t_+$.

 2) For $f(z)=z^2-2$, $f:[-2,2]\to[-2,2]$ (the Tchebyshev polynomial), we have $f(2)=2, f'(2)=4,
\chi(l)=\log 2$, where $l$ is the normalized length measure. We have $P(t)=\log 2-t\log 2$ for $t\ge -1$  and $P(t)=-t\log 4$ for $t\le -1$, so $t_-=-1$, $P(t)$ is non-differentiable at $t_-$ and for $t=-1$ there are two ergodic equilibrium states: Dirac at $z=2$ and $l$.

3) For any $f$ non-LyapHyp, $t_+=t_0<\infty$. However  $t_+<\infty$ can happen even for $f$ LyapHyp, see \cite{MS2}   and \cite{CR1, CR2}.

4) Notice that the condition \eqref{hyplog} is similar to the condition 3) from Section \ref{Hyperbolic potential}. For $f$ LyapHyp and $t>t_+$, no equilibrium state can exist, see \cite{InoRiv:12}.

5) For real $f$ as in Theorem \ref{equi2} satisfying LyapHyp and $K=\hat I$, we have $t_0=1$ and for $-\log|f'|$ we conclude that
a unique equilibrium state exists which is a.c.i.m.( that is: invariant absolutely continuous with respect to Lebesgue measure). In fact this assertions hold even for $t=t_0=t_+=1$ with very weak hyperbolicity properties e.g.~$|(f^n)'(f(c))|\to\infty$ for all $c\in\Crit(f)$, see
\cite{BRSvS} and \cite{ShenStrien}. For the complex case, see \cite{GraSmi:09} and stronger \cite{RivShen:14}.

\smallskip

 \begin{remark}\label{ind2}
 In the proof of Theorem \ref{equi2},
 we use (compare with the Remark \ref{inducing}) a return map $F(x)=f^{n(x)}$ to a ``nice'' (Markov) domain. However unlike in \cite{SzoUrbZdu:15}, we do not use in the construction of this set the equilibrium measure $\mu_\phi$ because we do not know a priori that it exists. The construction is geometric. $F$ is an infinite Iterated Function System, more precisely the family of all branches of $F^{-1}$ is,
 see \cite{MU} and \cite{Pesin} and references therein,
 expanding due to the ``acceleration'' from $f$ to $F$. Then we consider an equilibrium state ${\tt P}$ for $(F,\Phi)$ where $\Phi(x):=\sum_{j=0}^{n(x)-1}\phi_t(f^j(x))$, and consider an equivalent conformal measure.
We propagate these measures to the Lai-Sang Young tower $\{(x,j): 0\le j<n(x)\}$ and
project by $(x,j)\mapsto f^j(x)$ to $K$.\footnote{For applications to decide the existence or nonexistence of a finite a.c.i.m. for maps of interval with flat critical points or for entire or meromorphic maps depending on the {\tt P}-integrability of the first return time, see papers by N.~Dobbs, B.~Skorulski, J.~Kotus, G.~\'Swi\c atek.}

 Stochastic properties of ${\tt P}$ stay preserved along the construction to $\mu_\phi$. The analyticity of $P(t)$ follows from expressing $P(t)$ as
zero of a pressure for $F$ with potential depending on two parameters and Implicit Function Theorem. The latter idea came from \cite{StratUrb}.
\end{remark}

\begin{remark}
For probability measures $\mu_n$ weakly* convergent to some $\hat\mu$, in presence of critical points
$\int\log |f'|\,d\mu_n$ need not converge to  $\int\log |f'|\,d\hat\mu$. Only upper semicontinuity holds. Therefore, for $t>0$, the equilibrium states for $t_n\to t$ need not converge to an equilibrium state for $t$. A priori, the free energy in the Definition \ref{top_pres} can jump down. However, a modification of this method to prove existence of equilibria works, see \cite{DT}.

Notice also that passing to a weak*-limit with averages of Dirac measures on $\{x, \dots, f^n(x)\}$ proves  
$\limsup_{n\to\infty}\sup_{x\in K}\frac1nS_n(\log|f'|)(x) \le \chi_{\max}$. However an analogous inequality $\liminf \dots \ge\chi_{\inf}$ is obviously false. These observations contribute to the understanding of Lyapunov spectrum.
\end{remark}

\smallskip

{\bf Remarks on the Lyapunov spectrum.}  Theorem \ref{equi2} allows us to express the so-called dimension spectrum for Lyapunov exponents
with the use of Legendre transform, that is for all $\a>0$ and $\cL(\a):=\{x\in K: \chi(x)=\a\}$
\begin{equation}\label{Lyap_spec}
\HD(\cL(\a))=
\frac{1}{|\alpha|} \inf_{t\in\bR}
\left(P(t)+\alpha t \right).
\end{equation}
An ingredient is {\bf Ma\~n\'e's equality}
\begin{equation}\label{mane}
\HD(\mu)=h_\mu(f)/\chi(\mu)
\end{equation}
provided $\chi(\mu)>0$, \cite{PUbook}, where $\HD(\mu):=\sup\{\HD(X): \mu(X)=1\}$, applied to $\mu_{\phi_t}$.

The equality
 \eqref{Lyap_spec} concerns regular $x$'s, where $\chi(x)=\lim_{n\to\infty} \frac1n\log |(f^n)'(x)|$
 exists. It is  also possible to provide formulas or at least estimates for Hausdorff dimension of
the sets of irregular points $\cL(\a,\b):=\{x\in K: \underline{\chi}(x)=\a, \overline{\chi}(x)=\b\}$
for lower and upper Lyapunov exponents where we replace lim by $\liminf$ and  $\limsup$ respectively.
See \cite{GPR1} and \cite{GPR2} for this theory in complex and real settings.

However, these papers give no information about the size of sets with zero (upper) Lyapunov exponent.
Note at least that if $J(f)\not=\ov\C$ then ${\rm Leb}_2\{x\in J(f):\ov\chi(x)>0\}=0$. This is so because $\ov\chi(x)>0$ implies there exists $\sN\subset \Z_+$ of positive upper density, such that for $n\in\sN$, \eqref{ExpShrink} and \eqref{TCE} hold, see \cite[Section 3]{LPS}.

We do not know whether $\chi(x)=-\infty$ can happen for $x$ not pre-critical, except there is only one critical point in $K$, where $\chi(x)>-\infty$ follows from \eqref{average distance},  see \cite[Lemma 6]{GPR1}.

For $x$ being a critical value we can prove (in analogy to $\chi(\mu)\ge 0$):

\begin{theorem}[\cite{LPS}]\label{LPShen}
If for a rational function $f:\ov\C\to\ov\C$ there is only one critical point $c$ in $J(f)$  and no parabolic periodic orbits, then $\underline\chi(f(c))\ge 0$.
\end{theorem}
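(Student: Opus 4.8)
The plan is to show that if $\underline\chi(f(c)) < 0$, or even if $\underline\chi(f(c)) = 0$ fails in a strong enough sense, then one can manufacture an invariant probability measure $\mu \in \cM(f,K)$ with $\chi(\mu) < 0$, contradicting the fact (recalled after \eqref{HDformula} and used throughout Section \ref{Geometric pressure and equilibrium states}) that $\chi(\mu) \ge 0$ for every $f$-invariant measure. So the argument is by contradiction and the engine is a limit-measure construction along the backward-in-derivative portion of the forward orbit of $c$. First I would fix $x_0 = f(c)$, assume $\underline\chi(x_0) < -\delta < 0$ for some $\delta > 0$, and extract an increasing sequence $n_k$ with $\frac{1}{n_k}\log|(f^{n_k})'(x_0)| < -\delta$, i.e. $\frac{1}{n_k} S_{n_k}(\log|f'|)(x_0) < -\delta$. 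The natural candidate measures are the empirical averages $\mu_k := \frac{1}{n_k}\sum_{j=0}^{n_k-1} \delta_{f^j(x_0)}$; passing to a weak* subsequential limit $\mu$ gives an $f$-invariant probability measure on $\cl(\mathrm{orbit}(x_0)) \subset K$.

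The difficulty — and this is exactly the phenomenon flagged in the Remark just before the theorem — is that $\log|f'|$ is unbounded below near $c$, so $\int \log|f'|\,d\mu_k$ need not converge to $\int\log|f'|\,d\mu$: only upper semicontinuity $\limsup_k \int\log|f'|\,d\mu_k \le \int\log|f'|\,d\mu$ is automatic, and that inequality points the wrong way for a contradiction. The way around this is the hypothesis that there is exactly one critical point in $J(f)$, which makes Lemma \ref{av_dist} (the average-distance estimate \eqref{average distance}) available for the single relevant $c$: along the orbit of $x_0$, $\sum_{j=0}^{n}{}' -\log|f^j(x_0)-c| \le Qn$, i.e. the orbit of $x_0$ cannot spend too much logarithmic time near $c$, except for possibly one very close approach per block. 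Using non-flatness of $f$ at $c$ (so that $\log|f'| \asymp \log\,\mathrm{dist}(\cdot,c)$ near $c$) together with \eqref{average distance}, the negative part of $\frac{1}{n_k}S_{n_k}(\log|f'|)(x_0)$ is controlled uniformly, save for the single omitted index; that single index contributes a term $\frac{1}{n_k}\log|f^{j_k}(x_0)-c|$ which, were it not $o(1)$, would force $f^{j_k}(x_0)$ to be super-exponentially close to $c$ — and then a Koebe/bounded-distortion (BD) pull-back argument around that near-collision, combined with the absence of parabolic orbits, would itself yield exponential growth of $|(f^{n_k})'(x_0)|$, contradicting $\underline\chi(x_0)<0$. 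This dichotomy is the heart of the matter.

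With the near-critical excursions thus tamed, I would upgrade weak* convergence to convergence of the integrals: truncate $\log|f'|$ at level $-L$ to get a bounded continuous $\psi_L$, note $\int\psi_L\,d\mu_k \to \int\psi_L\,d\mu$, and use the uniform tail bound from \eqref{average distance} to show $\int(\log|f'| - \psi_L)\,d\mu_k \to 0$ uniformly in $k$ as $L\to\infty$, hence $\int\log|f'|\,d\mu = \lim_k \int\log|f'|\,d\mu_k = \lim_k \frac{1}{n_k}S_{n_k}(\log|f'|)(x_0) \le -\delta < 0$. But $\mu \in \cM(f,K)$, so this contradicts $\chi(\mu)\ge 0$. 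Therefore $\underline\chi(f(c)) \ge 0$. The boundary case $\underline\chi(f(c)) = 0$ needs no separate argument: the contradiction above only required the strict negativity $<-\delta$, and passing $\delta \to 0$ (or simply ruling out each $\delta>0$) gives $\underline\chi(f(c))\ge 0$.

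The main obstacle, then, is not the soft limit-measure step but the quantitative control of a single near-collision with $c$ within each window $[0,n_k)$: one must show that a deep approach $f^{j_k}(x_0)\approx c$ cannot simultaneously be "deep enough to spoil the integral estimate" and "compatible with $\underline\chi(x_0)<0$." I expect this to be handled exactly as in \cite{GPR1} (the cited Lemma 6 there), by a Koebe-distortion pull-back of a definite-size disc around $f^{n_k}(x_0)$ along the orbit through the near-collision, using that the single critical point is non-recurrent-enough by \eqref{average distance} and that there are no parabolic periodic orbits, so that exponential shrinking of the relevant components (cf. \eqref{ExpShrink}) kicks in and forces $\frac{1}{n_k}\log|(f^{n_k})'(x_0)|$ back up to be non-negative in the limit.
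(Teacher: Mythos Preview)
The paper does not contain a proof of Theorem~\ref{LPShen}; it is a survey and the theorem is simply quoted from \cite{LPS}, followed only by the remark that the $S$-unimodal case is in \cite{NowSands}. So there is no ``paper's own proof'' to compare your proposal against.

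On the merits of your proposal: the overall strategy (empirical measures along the orbit of $x_0=f(c)$, weak* limit, contradict $\chi(\mu)\ge 0$) is natural, and you correctly isolate the real difficulty, namely that Lemma~\ref{av_dist} controls $\sum'_j -\log|f^j(x_0)-c|$ only with one index omitted, so a single deep approach $|f^{j_k}(x_0)-c|\ll 1$ per window $[0,n_k)$ can by itself account for all of the negative Birkhoff sum. However, your resolution of this difficulty is not substantiated. You assert that if $|f^{j_k}(x_0)-c|<e^{-\epsilon n_k}$ then a Koebe pull-back together with the absence of parabolic orbits would force $|(f^{n_k})'(x_0)|$ to be exponentially large; but a deep approach to $c$ means $f^{j_k+1}(x_0)$ is very close to $x_0$, and the derivative $(f^{j_k+1})'(x_0)$ carries the \emph{small} factor $|f'(f^{j_k}(x_0))|$, so the pseudo-orbit may well be shadowing a cycle with small multiplier rather than a strongly repelling one. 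Nothing you have written rules this out, and ``no parabolics'' does not by itself give a uniform lower bound on multipliers of periodic orbits. Note also that the reference you invoke, \cite[Lemma~6]{GPR1}, is cited in the present paper only for the weaker conclusion $\chi(x)>-\infty$, not for $\underline\chi\ge 0$; the gap between these is precisely the step you leave open. The actual argument in \cite{LPS} does not go through empirical measures: it controls the derivative along the critical orbit directly via diameter/area estimates for pull-backs of disks, exploiting the single-critical-point hypothesis in a more structural way.
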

For $S$-unimodal maps of interval this was proved
by \cite{NowSands}.

\section{Other definitions of geometric pressure}\label{sec:other}


\begin{definition}[safe]\label{safe} See \cite[Definition 12.5.7]{PUbook}. We call $z\in K$
\textit{safe} if $z\notin \bigcup_{j=1}^\infty(f^j(\Crit(f)))$ and for every $\delta>0$ and all $n$ large enough
$B(z, \exp (-\delta n))\cap \bigcup_{j=1}^n(f^j(\Crit(f)))=\emptyset$.
\end{definition}
Notice that this definition implies that all points except at most a set of Hausdorff dimension 0, are safe.

\begin{definition}[Tree pressure]\label{treep}
For every $z\in K$ and $t\in \mathbb{R}$ define
\begin{equation}\label{treep-formula}
P_{\tree}(z,t)=\limsup_{n\to\infty}\frac1n\log\sum_{f^n(x)=z,\, x\in K} |(f^n)'(x)|^{-t}.
\end{equation}
\end{definition}
Compare with $P(f,\phi)$ from Theorem \ref{Gibbs}. Under suitable conditions, e.g.~for $z$ ``safe''
the limit exists, it is independent of $z$ and equal to $P(t)$. See \cite{Prz:99}, \cite{PrzRivSmi:03} and \cite{PUbook} for the complex case and \cite{PrzRiv:14} and \cite{Prz:16} for the real case.

A key is to extend all trajectories $T_n(x)=\{x,\dots,z\}$ backward and forward by time $m\ll n$ to get an Iterated Function System for $f^{n+m}$ and to consider its limit set. Its trajectories for time $n$ ``shadow'' $T_n(x)$. This proves $P_{\tree}(z,t)\le P_{\hyp}(t)$. The opposite inequality is immediate.

(Similarly one proves $P_{\var}(t)\le P_{\hyp}(t)$. Given $\mu$ with $\chi(\mu)>0$ one captures a hyperbolic $X$ by Pesin-Katok method.)


\medskip


For a continuous potential $\phi:X\to\mathbb{R}$, consider
\begin{equation}\label{Psep}
P_{\rm {sep}}(f,\phi):=\lim_{\e\to 0}\limsup_{n\to\infty}\frac1n \log \big(\sup_Y \sum_{y\in Y}\exp S_n\phi(y)\big),
\end{equation}
where the supremum is taken over all $(n,\e)$-separated sets $Y\subset X$, that is such $Y$ that for every distinct $y_1,y_2\in Y$, $\rho_n(y_1,y_2)\ge \e$, where $\rho_n$ is the metric defined by $\rho_n(x,y)=\max\{\rho(f^j(x),f^j(y)): j=0,\dots,n\}$.

For $\phi=-t\log|f'|$ for positive $t$, in presence of critical points for $f$, $P_{\rm {sep}}$ is always equal to $\infty$ by putting a point of a separated set at a critical point.
So we replace it by the tree pressure. One can however use infimum over $(n,\e)$-spanning sets, thus defining \underline{$P_{\rm {spanning}}(f,\phi)$}. This is a valuable notion, often coinciding with other pressures. See \cite{Prz:16} for an
outline of a respective theory. Let me mention only that this is equal to $P(f,-t\log|f'|)$ for $t>0$  in the complex case if

\begin{definition}\label{wbls}
$f$ is {\it weakly backward Lyapunov stable} which means that for every $\delta>0$ and $\e>0$ for all $n$ large enough and every disc $B=B(x,\exp -\delta n)$ centered at $x\in K$, for every $0\le j \le n$ and every component $V$ of $f^{-j}(B)$ intersecting $K$, it holds that $\diam V\le\e$.
\end{definition}
This holds for all rational maps with at most one critical point whose forward trajectory is in $J(f)$ or is attracted to $J(f)$, due to Theorem \ref{LPShen}.

\smallskip

{\bf Question.} Does backward weak Lyapunov stability hold for all rational maps?

\smallskip

Finally, {\it periodic pressure} $P_{\Per}$ is defined as $P_{\tree}$ with $x\in \Per_n$ (periodic of period $n$)  rather than $f^n$-preimages of $z$. In \cite{PrzRivSmi:04}, this was proved for rational $f$ (see also \cite{BMS} for a class of polynomials) on $K=J(f)$ that $P_{\Per}(t)=P(t)$ provided

\smallskip

{\bf Hypothesis H}.\  For every $\d>0$ and all $n$ large enough, if for a set
$\cP\subset\Per_n$  for all
$p,q\in P$ and all $i:0\le i<n$ \
$\dist (f^i(p),f^i(q))< \exp -\d n$, then
$\# \cP\le \exp\d n$.

\smallskip

{\bf Question.}
  Does this condition always hold? In particular, can large bunches of periodic orbits exist with orbits exponentially close to a Cremer fixed point?


\section{Geometric coding trees, limit sets, Gibbs meets Hausdorff}\label{sec:gct}

The notion of geometric coding tree, g.c.t.,  already appeared in the work \cite{Ja}, where in the expanding case the finite-to-one property of the resulting coding was proved. It was used later
in \cite{Prz:85, Prz:86} and in a full strength in \cite{PUZ:89, PUZ:91} and papers following them. Similar graphs have since been constructed to analyse the topological aspects of
non-invertible dynamics, see for instance \cite{Nek, HaPi}.



\begin{definition}\label{gct} 
Let $U$ be an open connected subset of the Riemann sphere $\ov\C$.
Consider a holomorphic mapping $f:U\to \ov\C$ such that $f(U)\supset U$ and
$f:U\to f(U)$ is a proper map.
Suppose that $\Crit(f)$ is finite.
 Consider an arbitrary $z\in f(U)$. Let $z^1,z^2,\dots,z^d$ be some
of the $f$-preimages of $z$ in $U$ with $d\ge 2$. Consider smooth
curves $\g^j:[0,1]\to f(U)$, \ $j=1,\dots,d$,   joining $z$ to $z^j$ respectively
(i.e.~$\g^j(0)=z, \g^j(1)=z^j$), intersections allowed, such that $\g^j\cap f^n(\Crit(f))=\emptyset$ for every $j$ and $n>0$.

For every sequence
$\a=(\a_n)_{n=0}^\infty \in \S^d$ (shift space with left shift map $\varsigma$ defined
in Section \ref{Introduction})  define $\g_0(\a):=\g^{\a_0}$. Suppose that for some $n\ge
0$, for every $ 0\le m\le n$, and all $\a\in\S^d$,  curves $\g_m(\a): [0.1] \to U$ are already defined.
Suppose that for $1\le m\le n$ we have $f\circ \g_m(\a)=\g_{m-1}(\varsigma(\a))$,
and $\g_m(\a)(0)=\g_{m-1}(\a)(1)$.
Define the curves $\g_{n+1}(\a) $ so that the previous
equalities hold by taking respective $f$-preimages of curves $\g_n$.
For every  $\a\in\S^d$ and $n\ge 0$ denote $z_n(\a):=\g_n(\a)(1)$.

The graph $\sT={\sT}(z,\g^1,\dots,\g^d)$ with the vertices $z$ and $z_n(\a)$ and edges $\g_n(\a)$ is
called a {\it geometric coding tree} with the root at $z$. For every $\a\in\S^d$ the subgraph
composed of $z,z_n(\a)$ and $\g_n(\a)$ for all $n\ge 0$ is called an  {\it infinite geometric branch}
and denoted by $b(\a)$.
It is called {\it convergent} if the sequence $\g_n(\a)$
is convergent to a point in $\cl U$. We define the {\it coding map}
$z_\infty :{\sD}(z_\infty)\to \cl U$ by $z_\infty(\a):=\lim_{n\to\infty}z_n(\a)$ on the
domain ${\sD}={\sD}(z_\infty)$ of all such $\a$'s for which $b(\a)$ is convergent.

Denote  $\La:=z_\infty({\sD}(z_\infty))$. If the  map $f$ extends
holomorphically to a neighbourhood of its closure $\cl\Lambda$ in $\ov\C$,  then $\La$ is called a {\it quasi-repeller},
see \cite{PUZ:89}.

 A set formally larger than $\cl \Lambda$ is of interest, namely $\widehat\Lambda$ being the set of all accumulation points of $\{z_n(\alpha): \alpha\in\Sigma^d\}$ as $n\to\infty$. If our g.c.t. is in $\Omega$  being an RB-domain, see Section \ref{sec:bound}, or $f$ is just $R\circ g \circ R^{-1}$ defined only on $\Omega$, see Remarks below, then it is easy to see that $\cl \Lambda=\widehat\Lambda$. I do not know how general this equality is.

\end{definition}

{\bf Remarks.}
Given a Riemann map $R:\D\to \Om$ to a connected simply connected domain $\Om\subset\C$, (i.e. holomorphic bijection)
we can consider a branched covering map, say $g(z)=z^d$ on $\D$,
 and $f=R\circ g\circ R^{-1}$. Then, chosen $z\in \Om$  and $\g^j$ joining it with its preimages in $\Om$ (close to $\Fr\Om$) we can consider the respective tree $\sT$. 
 Then instead of considering $R$ and its radial limit $\ov R$, we can consider
 the limit (along branches)  $z_\infty: \S^d\to\Fr\Om$.
  This provides a structure of symbolic dynamics useful to verify stochastic laws.

   This is especially useful if considered measures come from $\partial \D$ via  $\ov R$, rather than being some equilibrium states for potentials living directly on $\Fr \Om$.
This is the case of harmonic measure $\omega$ which is the $\ov R_*$-image of a length measure $l$.
We can consider the lift of $l$ to $\Sigma^d$ via coding by the tree $\sT'=R^{-1}(\sT)$ and next its projection by $(z_\infty)_*$ to $\Fr\Om$.

Our g.c.t.'s are always available in presence of adequate holomorphic $f$, even in the absence of 
$\Om$,
i.e.~in the absence of a Riemann map. The tree with the coding it induces yields a discrete generalization/replacement of a Riemann map.



It was proved in \cite{PSkrzy} that $\sD$ is the whole $\Sigma^d$ except a ``thin'' set.
In particular, for a Gibbs measure $\nu$ for a H\"older potential, $z_\infty(\a)$ exists for $\nu$-a.e.~$\a$, hence the push forward measure $(z_\infty)_*(\nu)$ makes sense. Moreover, our codings $\z_\infty$ are always ``thin''-to-one. This is a discrete generalization of Beurling's Theorem concerning the boundary behaviour of Riemann maps. ``Thin'' means of zero logarithmic capacity type, depending on the properties of the tree
(the speed of the accumulation of $\gamma^j$ by critical trajectories; the speed does not matter if we replace ``thin'' by zero Hausdorff dimension).
In particular this coding preserves the entropies.


\smallskip

For appropriate $\nu\in \cM(\varsigma,\Sigma^d)$ and $\psi:\Sigma^d\to\mathbb{R}$ with $\int\psi\,d\nu=0$,
consider the {\it asymptotic variance} (of course one can consider spaces more general than $\Sigma^d$)
\begin{equation}\label{s2} 
\s^2=\sigma^2_\nu(\psi):=\lim_{n\to\infty}\frac1n\int (S_n\psi)^2\, d\nu.
\end{equation}


\begin{theorem}\label{tree-HD}
Let $\La$ be a quasi-repeller for a geometric coding tree
for a
holomorphic  map $f:U\to\ov\C$. Let $\nu$ be a $\varsigma$-invariant  Gibbs
measure on $\S^d$
for a H\"older continuous real-valued function $\phi$ on $\S^d$. Assume $P(\varsigma,\phi)=0$.
Consider $\mu:= (z_\infty)_*(\nu)$.

Then, for $\psi:= -\HD(\mu)(\log|f'|\circ z_\infty))-\phi$, we have $\int\psi\,d\nu=0$.

If the asymptotic variance $\s^2=\s^2_\nu(\psi)$ is positive,
then there exists a compact  $f$-invariant hyperbolic repeller $X$ being a subset of $\La$ such that
$\HD (X)>\HD(\mu)$. In consequence $\HD_{\hyp}(\La)>\HD(\mu)$ (defined after \eqref{Lyap_spec}).

If $\sigma^2=0$ then $\psi$ is cohomologous to 0. Then for each $x,y\in \cl\La$ not postcritical, if
$z=f^n(x)=f^m(y)$ for some positive integers $n,m$, the orders of criticality of $f^n$ at $x$ and $f^m$ at $y$ coincide. In particular all critical points in $\cl\Lambda$ are pre-periodic.
\end{theorem}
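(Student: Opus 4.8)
The plan is to analyze the variance $\sigma^2_\nu(\psi)$ via the thermodynamic formalism machinery and exploit the rigidity that comes from $\psi$ being cohomologous to zero. First I would recall, as in \cite{PUZ:89, PUZ:91}, that the push-forward $\mu=(z_\infty)_*\nu$ is an exact-dimensional measure with $\HD(\mu)=h_\nu(\varsigma)/\chi$, where $\chi=\int \log|f'|\circ z_\infty\,d\nu$ is the Lyapunov exponent; this is Ma\~n\'e's equality \eqref{mane} combined with the fact that the coding $z_\infty$ preserves entropy. From this and $P(\varsigma,\phi)=0$ one checks $\int\psi\,d\nu=\HD(\mu)\chi - \int\phi\,d\nu$; since $\phi$ has zero pressure and $\nu$ is its Gibbs (equilibrium) measure, $\int\phi\,d\nu=-h_\nu(\varsigma)$, so $\int\psi\,d\nu=\HD(\mu)\chi-h_\nu(\varsigma)=0$ by Ma\~n\'e. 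That gives the first assertion.

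For the positive-variance case I would invoke the standard perturbative argument (the one behind ``$P_{\tree}\le P_{\hyp}$'' and the capture of hyperbolic subsets by the Pesin--Katok method already alluded to in Section \ref{sec:other}): when $\sigma^2_\nu(\psi)>0$, the potential $\phi$ is \emph{not} cohomologous to $-\HD(\mu)\log|f'|\circ z_\infty$, so one can deform within the hyperbolic part of $\La$ to find a compact invariant hyperbolic repeller $X\subset\La$ whose dimension, computed by Bowen's formula (Proposition \ref{hypdim}) applied to $-t\log|f'|$ restricted to $X$, exceeds $\HD(\mu)$. Concretely one approximates $\nu$ by Gibbs measures supported on larger and larger hyperbolic subsystems, uses that the pressure function $t\mapsto P(f|_X,-t\log|f'|)$ is real-analytic with derivative $-\chi<0$ and second derivative equal to (a positive multiple of) the asymptotic variance, and concludes that the zero $t_0(X)$ of this function — which is $\HD(X)$ — lies strictly above $\HD(\mu)$ once the variance is bounded below; then $\HD_{\hyp}(\La)\ge\HD(X)>\HD(\mu)$.

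The heart of the theorem is the $\sigma^2=0$ case. Here $\sigma^2_\nu(\psi)=0$ forces $\psi$ to be cohomologous to a constant (necessarily $0$, since $\int\psi\,d\nu=0$) — this is the classical fact that vanishing asymptotic variance in a Gibbs/CLT setting is equivalent to a coboundary, via Livšic-type arguments on $\Sigma^d$. So there is a (Hölder, hence continuous) $v:\Sigma^d\to\mathbb R$ with
\begin{equation}\label{cohom}
\HD(\mu)\,\bigl(\log|f'|\circ z_\infty\bigr)(\alpha) = -\phi(\alpha) + v(\varsigma\alpha)-v(\alpha).
\end{equation}
The idea is then to read \eqref{cohom} along two branches landing at a common point. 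If $x,y\in\cl\La\setminus(\text{postcritical set})$ with $z=f^n(x)=f^m(y)$, pick $\alpha,\beta\in\Sigma^d$ coding $x,y$ with $\varsigma^n\alpha=\varsigma^m\beta$. Summing \eqref{cohom} telescopically along the first $n$ symbols of $\alpha$ and the first $m$ symbols of $\beta$, the $\phi$-sums are controlled by the Gibbs/bounded-distortion property \eqref{Gibbs-eq} (they measure $\log$ of conformal-measure ratios of cylinders, hence of $\mu$-measures of small neighborhoods), and the telescoping of $v$ collapses to endpoint terms $v(\varsigma^n\alpha)-v(\alpha)$ and $v(\varsigma^m\beta)-v(\beta)$ which agree up to the common tail. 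The upshot is an identity comparing $\log|(f^n)'(x)|$ and $\log|(f^m)'(y)|$ up to uniformly bounded error that is actually \emph{exact} after accounting for the coboundary — and near a critical point the derivative of $f^k$ blows down like $\dist^{(\ell-1)}$ where $\ell$ is the order of criticality, so matching the two sides forces the orders of criticality of $f^n$ at $x$ and of $f^m$ at $y$ to coincide. Applying this with $x=c$ a critical point in $\cl\La$ and $y$ any point in its grand orbit, one deduces that the criticality cannot be ``diluted'' along the forward orbit; chasing this rigidity around the (finitely many, by hypothesis) critical points in $\cl\La$ — and using that $f(\cl\La)\subset\cl\La$ together with finiteness of $\Crit(f)$ — forces each such critical point to have a finite (pre)periodic forward orbit.

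I expect the main obstacle to be making the branch-comparison argument in \eqref{cohom} rigorous in the presence of critical points: the curves $\gamma^j$ avoid the postcritical set, and $z_\infty$ is only ``thin''-to-one, so one must be careful that the two codings $\alpha,\beta$ can actually be chosen with a common tail and that the derivative/distortion estimates survive passing to the limit $n,m$ fixed but the cylinders shrinking — this is where the precise ``thin''-to-one statement of \cite{PSkrzy} and the bounded-distortion estimates for branches of $f^{-n}$ along $\gamma^j$ do the real work. The coboundary-from-zero-variance step and the entropy-preservation of $z_\infty$ are by now standard; the delicate point is converting the cohomological identity into a pointwise statement about orders of criticality at genuine critical points of $f$ rather than at generic points of $\cl\La$.
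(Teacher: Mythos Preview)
Your argument for $\int\psi\,d\nu=0$ is essentially the paper's (modulo a harmless sign slip in the intermediate line), and your approach to the $\sigma^2>0$ case via strict convexity of pressure is a legitimate alternative to the paper's direct use of CLT fluctuations of $S_n\psi$; both routes ultimately rely on Pesin--Katok to capture a hyperbolic $X\subset\La$, and the extra care needed to land $X$ inside $\La$ (rather than merely in $\cl\La$) is the same, cf.~\cite{Prz:05}.

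The genuine gap is in the $\sigma^2=0$ case. You assert that the transfer function $v$ in \eqref{cohom} is H\"older on $\Sigma^d$. But $\psi=-\HD(\mu)\log|f'|\circ z_\infty-\phi$ is \emph{unbounded} on $\Sigma^d$ as soon as $\cl\La$ contains a critical point of $f$: points $z_\infty(\alpha)$ accumulate on $\Crit(f)\cap\cl\La$, so $\log|f'|\circ z_\infty$ is not bounded below. A coboundary $v\circ\varsigma-v$ with $v$ H\"older (hence bounded) is itself bounded, so your equation \eqref{cohom} cannot hold with such $v$. In fact, if $v$ were bounded, summing \eqref{cohom} along $n$ steps for $\alpha'$ with $z_\infty(\alpha')=x'\to x\in\Crit(f^n)\cap\cl\La$ would give a bounded right-hand side (since $n$ is fixed, $\phi$ and $v$ are bounded) while the left-hand side tends to $-\infty$. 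Your argument would thus prove that $\cl\La$ contains \emph{no} critical points, which is false in general and in any case not what the theorem asserts.

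What the paper actually uses (via \cite[Lemma~1]{PUZ:89}) is that $\sigma^2=0$ gives $v\in L^2(\nu)$, after which a Liv\v{s}ic-type rigidity step is needed: one descends $v$ to $\cl\La$ and shows it extends \emph{harmonically} to a neighbourhood, so that the ``order of growth'' of $v$ at $z=f^n(x)=f^m(y)$ is a well-defined local invariant of the harmonic function. The cohomology equation then forces the blow-up order of $-\log|(f^n)'|$ at $x$ and of $-\log|(f^m)'|$ at $y$ each to equal this common order, whence they coincide. The pre-periodicity of critical points in $\cl\La$ follows from this matching of orders along the forward orbit. Your branch-comparison idea is morally the right picture, but it only becomes a proof once you replace ``$v$ H\"older'' by ``$v$ has a harmonic extension with a well-defined singularity at $z$'', which is precisely the non-trivial Liv\v{s}ic rigidity input.
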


The latter condition 
happens only in special situations, see e.g. Theorem \ref{Z1-maximal} below.
See \cite{SzoUrbZdu:15}
for more details; $\phi$ lives there directly on $J(f)$, but it does not make substantial difference. See also Section \ref{sec:acc}.

\smallskip

Given a mapping $f:X\to X$, given two functions $u,v:X\to\mathbb{R}$ we call $u$ {\it cohomologous} to $v$ in class $\cC$ if there exists $h:X\to \mathbb{R}$ belonging to $\cC$ such that $u-v=h\circ f-h$.
An important \cite[Lemma 1]{PUZ:89} says that $\sigma^2=0$ above implies $\psi$ cohomologous to 0 in $L^2(\mu)$ and often in a smaller class depending on $\psi$ (Liv\v{s}ic type rigidity).

\smallskip

Notice that $\int\psi\,d\nu=-\HD(\mu)\chi(\mu)-\int\phi\,d\nu=-h_\mu(f)-\int\phi\,d\nu=
-h_\nu(\varsigma)-\int\phi\,d\nu=P(\varsigma,\phi)=0$.
Now, to prove Theorem \ref{tree-HD} note 
$2\chi(\mu)\ge h_{\mu}(f)=h_\nu(\varsigma)>0$, see \cite[Ruelle's inequality]{PUbook} (used also to 3)$\Rightarrow$5) in Section \ref{Hyperbolic potential}) and  \cite{Prz:85}. 
So considering the natural extension of $(\Sigma^d,\nu,\varsigma)$ (here two-sided shift space) and Katok-Pesin theory, we find hyperbolic $X$ with
$\HD(X)\ge \HD(\mu)-\e$ for an arbitrary $\e>0$. Compare comments on shadowing in Section \ref{sec:other}.

\smallskip

$\bullet$ \ The positive $\s^2$ yields by Central Limit Theorem large fluctuations of the sums $\sum_{j=0}^{n-1}\psi\circ\varsigma^j$ from $n\int\psi\,d\nu$ (here 0), allowing to find $X$ with $\HD(X)>\HD(\mu)$.

A special care is needed to get $X\subset \La$,
see \cite{Prz:05} (originated in \cite{PZ:94}).

\smallskip

The above fluctuations were used by A. Zdunik to prove for constant $\phi$

\begin{theorem}[\cite{Z1}]\label{Z1-maximal} Let $f:\ov\C\to\ov\C$ be a rational mapping of degree
$d\ge 2$. If  $\sigma^2>0$, 
then for $\mu_{\max}(f)$ the measure of maximal entropy (equal $\log d$),  $\HD(J(f))>\HD(\mu_{\max}(f))$. Otherwise, $f$ is postcritically finite with a parabolic orbifold,  \cite{Milnor}.
\end{theorem}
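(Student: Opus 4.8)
The plan is to reduce Theorem \ref{Z1-maximal} to the mechanism already packaged in Theorem \ref{tree-HD}. First I would set up a geometric coding tree $\sT$ for $f$ on $U=\ov\C$ rooted at a generic (safe) point $z$, with $d=\deg f$ many preimage branches $\g^1,\dots,\g^d$ chosen to avoid all forward critical orbits; then $\La\subseteq J(f)$ is the associated quasi-repeller and, by the results cited after Definition \ref{gct} (\cite{PSkrzy}), the coding map $z_\infty:\S^d\to J(f)$ is defined off a ``thin'' set and pushes forward the uniform Bernoulli measure $\nu$ on $\S^d$ to $\mu_{\max}(f)$, the measure of maximal entropy (this uses that $\mu_{\max}$ is the unique $f$-invariant measure with entropy $\log d$ and that the coding preserves entropy). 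Here $\phi\equiv -\log d$ is the relevant H\"older potential on $\S^d$; it is constant, has $P(\varsigma,\phi)=\log d-\log d=0$, and $\nu$ is its Gibbs measure.

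Next I would apply Theorem \ref{tree-HD} verbatim with this $\nu,\phi$. The theorem produces the function $\psi=-\HD(\mu_{\max})\,(\log|f'|\circ z_\infty)-\phi$ with $\int\psi\,d\nu=0$, and splits into two cases according to the asymptotic variance $\sigma^2=\sigma^2_\nu(\psi)$. If $\sigma^2>0$, Theorem \ref{tree-HD} gives a compact $f$-invariant hyperbolic repeller $X\subseteq\La\subseteq J(f)$ with $\HD(X)>\HD(\mu_{\max})$, hence $\HD(J(f))\ge\HD(X)>\HD(\mu_{\max}(f))$, which is the first alternative of the statement. If $\sigma^2=0$, then by \cite[Lemma 1]{PUZ:89} $\psi$ is cohomologous to $0$, so $\HD(\mu_{\max})\,\log|f'|\circ z_\infty$ is cohomologous to the constant $\log d$; equivalently $\log|f'|\circ z_\infty$ is cohomologous to a constant on the symbolic system. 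This rigidity is exactly the hypothesis that forces, via the last paragraph of Theorem \ref{tree-HD}, that all critical points in $\cl\La$ are preperiodic and more precisely that the local degrees of iterates agree at any two points with a common image; unwinding this for \emph{all} of $J(f)=\cl\La$ (taking the tree rich enough, or varying the root, so that $\cl\La=J(f)$ in the maximal-entropy setting) says precisely that $f$ is postcritically finite with parabolic orbifold in the sense of \cite{Milnor}. Indeed, a constant Lyapunov-exponent-type cocycle $\log|f'|\sim\log d$ means the orbifold metric is expanded by exactly the constant factor $d$, i.e. $f$ is a (parabolic) Latt\`es-type or power/Chebyshev-type map; these are exactly the postcritically finite maps with parabolic orbifold.

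The main obstacle I anticipate is the careful handling of the $\sigma^2=0$ rigidity step: Theorem \ref{tree-HD} only directly gives information about points of $\cl\La$ that are not postcritical and only compares orders of criticality along common images, so one must argue that (i) one may choose the tree — or a finite family of trees with varying roots — so that the union of the closures $\cl\La$ exhausts $J(f)$ (or at least contains all critical points and enough of the dynamics), and (ii) the cohomology-to-a-constant of $\log|f'|$ on the symbolic level, combined with Liv\v{s}ic-type regularity, upgrades to an honest statement about the orbifold of $f$ on $\ov\C$. This is where the classical classification of rational maps with parabolic orbifold (\cite{Milnor}, and the corresponding conformal-expansion characterization going back to \cite{Z1}) must be invoked: one shows the conformal measure / maximal measure being ``Ahlfors-regular with $\HD=\HD(\mu_{\max})$'' forces the Julia set to be a round circle, a segment, or the whole sphere with a flat/elliptic-type metric, which are precisely the parabolic-orbifold postcritically finite cases. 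The remaining steps — verifying $z_\infty$ is thin-to-one, that it preserves entropy, and that $\mu_{\max}=(z_\infty)_*\nu$ — are standard given the cited results and I would treat them briefly.
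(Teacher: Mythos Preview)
Your proposal is essentially the paper's approach: reduce to Theorem~\ref{tree-HD} with the constant potential $\phi\equiv -\log d$ and $\nu$ the Bernoulli $(1/d,\dots,1/d)$ measure, so that $(z_\infty)_*\nu=\mu_{\max}$; the $\sigma^2>0$ case is handled exactly as you describe.

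For $\sigma^2=0$ the paper's mechanism is more specific than what you outline, and it sidesteps your concern~(i). Once $\psi$ is a coboundary one has $\log|f'|=v\circ f-v+\Const$ on $J(f)$; the decisive Liv\v{s}ic-type step is that $v$ extends to a \emph{harmonic} function on a neighbourhood of $J(f)$ (in the ``real case'' the equation then lives on a union of real-analytic curves containing $J(f)$, otherwise on all of $\ov\C$). It is this harmonic extension, rather than any exhaustion of $J(f)$ by varying the root of the tree, that lets one compare the ``orders'' of growth of $-\log|(f^n)'|$ and $-\log|(f^m)'|$ at any two preimages of a common point with the order of growth of $v$ there, forcing them to coincide and yielding the parabolic-orbifold conclusion. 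Your alternative suggestion via Ahlfors-regularity of $\mu_{\max}$ is not the route taken here.
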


She proved in fact the existence of a hyperbolic $X\subset J(f)$ satisfying
$\HD (X)>\HD(\mu_{\max}(f))$, hence $\HD_{\hyp}(J(f))>\HD(\mu_{\max}(f))$.

\smallskip

$\bullet$ \  In the $\sigma^2=0$  case,
$v:J(f)\to \mathbb{R}$ satisfying the \underline{cohomology equation} $\log|f'|=v\circ f -v +\Const$ on $J(f)$ extends to a harmonic function beyond $J(f)$ (Liv\v{s}ic rigidity) giving this equality on the union of  real analytic curves containing $J(f)$ (called {\it real case}) or to $\ov\C$. In Theorem \ref{tree-HD} on $\Lambda$ and for the extension beyond, in Theorem \ref{Z1-maximal},
the ``orders'' of growth of $-\log |(f^n)'|$ at $x$ and of $-\log |(f^m)'|$ at $y$ must by cohomology equation  be equal to the ``order'' of growth of $v$ at $z$, so they
must coincide (a phenomenon ``conjugated'' to the presence of an invariant line field).
This implies parabolic orbifold for Theorem \ref{Z1-maximal}.

\smallskip

Theorem \ref{Z1-maximal} applied to a polynomial $f$ with connected Julia set, by

\noindent $\HD(\mu_{\max}(f))=1$ \cite{Manning},
implies the following celebrated result:

\begin{theorem}[ A. Zdunik \cite{Z1}]\label{Z1-polynomial}
 For every polynomial $f$ of degree at least 2, with connected Julia set,
either $J(f)$ is a circle or an interval or else
it is fractal, namely $\HD(J(f))>1$.
\end{theorem}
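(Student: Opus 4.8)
The plan is to derive Theorem~\ref{Z1-polynomial} as a direct consequence of Theorem~\ref{Z1-maximal} together with the classical identity $\HD(\mu_{\max}(f))=1$ for polynomials with connected Julia set. First I would recall that for a polynomial $f$ of degree $d\ge 2$ with connected Julia set $J(f)$, the measure of maximal entropy $\mu_{\max}(f)$ coincides with the harmonic measure of the basin of infinity viewed from $\infty$ (equivalently, the pullback of normalized arclength on $\partial\D$ under the B\"ottcher coordinate), and by Manning's theorem \cite{Manning} one has $\HD(\mu_{\max}(f))=1$. So the dichotomy in Theorem~\ref{Z1-maximal} becomes: either $\HD(J(f))>1$, or $\sigma^2=0$, in which case $f$ is postcritically finite with parabolic orbifold.

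Next I would analyze the degenerate branch $\sigma^2=0$. By the discussion following Theorem~\ref{Z1-maximal}, $\sigma^2=0$ forces the cohomology equation $\log|f'|=v\circ f-v+\Const$ on $J(f)$ with $v$ extending harmonically past $J(f)$, and $f$ is then postcritically finite with parabolic orbifold. For polynomials the classification of maps with parabolic orbifold (see \cite{Milnor}, \cite{Z1}) is very restrictive: up to affine conjugacy $f$ is either a power map $z\mapsto z^{\pm d}$ (Julia set a circle), or a Chebyshev polynomial (Julia set the interval $[-2,2]$), or a Latt\`es-type example; but a Latt\`es map is never a polynomial, since its Julia set is the whole sphere. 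Hence in the $\sigma^2=0$ case $J(f)$ is a round circle or a straight line segment. Combining the two branches gives exactly the trichotomy: $J(f)$ is a circle, an interval, or satisfies $\HD(J(f))>1$.

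The main obstacle — which is really the content already packaged inside Theorem~\ref{Z1-maximal} and the cited rigidity — is the $\sigma^2=0$ analysis: one must pass from vanishing asymptotic variance to the Liv\v{s}ic-type rigidity statement that $\log|f'|$ is cohomologous (in $L^2(\mu_{\max})$, hence by harmonicity in a stronger class) to a coboundary plus constant, and then extract from the resulting invariant "order of growth" structure that $f$ is postcritically finite with parabolic orbifold. Since we are permitted to invoke Theorem~\ref{Z1-maximal} wholesale, in this proof that step reduces to quoting it; the only genuinely new input needed here is Manning's $\HD(\mu_{\max})=1$ and the elementary fact that no polynomial has parabolic orbifold other than the power and Chebyshev maps, whose Julia sets are visibly a circle and an interval respectively. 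I would therefore organize the write-up as: (i) invoke $\HD(\mu_{\max}(f))=1$; (ii) apply Theorem~\ref{Z1-maximal} to get the dichotomy $\HD(J(f))>1$ or parabolic orbifold; (iii) list the polynomial maps with parabolic orbifold and identify their Julia sets, concluding.
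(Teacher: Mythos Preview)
Your proposal is correct and follows exactly the route the paper indicates: the paper derives Theorem~\ref{Z1-polynomial} from Theorem~\ref{Z1-maximal} together with Manning's identity $\HD(\mu_{\max}(f))=1$ \cite{Manning}, and you have simply spelled out the remaining step (that the only polynomials with parabolic orbifold are, up to conjugacy, power maps and Chebyshev polynomials, with Julia set a circle or interval) which the paper leaves implicit.
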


\section{Boundaries, radial growth, harmonic vs Hausdorff}\label{sec:bound}

For polynomials with connected Julia set the measure $\mu_{\max}(f)$ coincides with harmonic measure $\omega$ (viewed from $\infty$). 
This led to another   proof of Theorem \ref{Z1-polynomial}, especially the $\sigma^2=0$ part, see \cite{Z2}, in the language of boundary behaviour of Riemann map and  harmonic measure (compare also  model Theorem \ref{z2+c} ).

Theorem \ref{Z1-polynomial} has been strengthened  from this point of view in
\cite{Prz:06}, preceded by \cite{PZ:94}, as follows.

\begin{theorem}\label{HD>1}
Let $f:\ov\C\to\ov\C$ be a rational map of degree at least 2 and $\Om$ be
a  simply connected immediate basin of attraction to an attracting periodic orbit  (that is a connected component of the set attracted to the orbit, intersecting it).
Then, provided $f$ is not a finite Blaschke product in some holomorphic coordinates, or
a two-to-one holomorphic factor of a Blaschke product,
$\HD_{\hyp}(\Fr\Om)>1$.
\end{theorem}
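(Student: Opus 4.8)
The plan is to realize $\Fr\Om$ (or more precisely a large hyperbolic subset of it) through a geometric coding tree and then apply Theorem~\ref{tree-HD} with $\mu$ taken to be the harmonic measure $\omega$ on $\Fr\Om$ viewed from inside $\Om$. First I would pass to an iterate so that $f(\Om)=\Om$ and $\Om$ contains an attracting (or, after a standard perturbation argument, we may as well treat the parabolic/Siegel boundary cases separately or exclude them by the hypothesis) fixed point $p\in\Om$. Fixing a Riemann map $R:\D\to\Om$ with $R(0)=p$, the map $g:=R^{-1}\circ f\circ R$ is a proper holomorphic self-map of $\D$ fixing $0$, i.e.\ a finite Blaschke product of some degree $k=\deg(f|_\Om)\ge 1$. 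The harmonic measure $\omega$ from $p$ is exactly $\ov R_*(l)$, where $l$ is normalized length on $\partial\D$, and $l$ is the measure of maximal entropy $\log k$ for $g$ on $\partial\D$ when $k\ge 2$. So $\omega$ is $f$-invariant on $\Fr\Om$ with entropy $\log k$, and by Manning-type reasoning (Brolin/Manning, cf.\ the discussion before Theorem~\ref{Z1-polynomial}) $\HD(\omega)$ has a known value; what matters is that $\HD(\omega)\le 1$, with equality governed by rigidity.

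The second step is the dichotomy via asymptotic variance. Using the coding tree $\sT$ rooted at some $z\in\Om$ joined by curves to its $k$ preimages inside $\Om$ — with $z_\infty:\Sigma^k\to\Fr\Om$ the coding map, which is defined $\nu$-a.e.\ for the Gibbs (here Bernoulli/maximal) measure $\nu$ on $\Sigma^k$, and which satisfies $(z_\infty)_*\nu=\omega$ — I would invoke Theorem~\ref{tree-HD} with $\phi\equiv-\log k$ (so $P(\varsigma,\phi)=0$) and $\psi:=-\HD(\omega)\,(\log|f'|\circ z_\infty)-\phi$. If the asymptotic variance $\sigma^2=\sigma^2_\nu(\psi)>0$, Theorem~\ref{tree-HD} immediately produces an $f$-invariant hyperbolic repeller $X\subset\cl\Lambda\subset\Fr\Om$ with $\HD(X)>\HD(\omega)$; if moreover $\HD(\omega)=1$ this already gives $\HD_{\hyp}(\Fr\Om)>1$, and if $\HD(\omega)<1$ one still needs an extra argument (see below) to push past $1$.

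The third step handles $\sigma^2=0$. By Theorem~\ref{tree-HD} (and the Livšic-type rigidity \cite[Lemma~1]{PUZ:89}) $\psi$ is cohomologous to $0$, so $\log|f'|$ restricted to $\Fr\Om$ satisfies a cohomology equation $\log|f'|=v\circ f-v+\Const$ with $v$ extending harmonically across $\Fr\Om$ (transferring through $R$, this is the statement that $\log|g'|$ on $\partial\D$ is cohomologous to a constant, which forces $g$ to be conjugate to $z\mapsto z^k$ up to a Möbius transformation — equivalently $f|_{\Om}$ is, in suitable coordinates, a finite Blaschke product, or a two-to-one holomorphic factor of one). That is exactly the excluded case, so under the hypothesis of the theorem $\sigma^2>0$ and the previous paragraph applies.

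**Main obstacle.** The genuine difficulty is twofold. First, controlling $\HD(\omega)$: in the case $k\ge 2$ one has $\HD(\omega)=1$ by Manning's theorem (harmonic measure on a Blaschke-product boundary has dimension one), so the $\sigma^2>0$ branch directly yields $\HD_{\hyp}(\Fr\Om)>\HD(\omega)=1$, done; but when $\deg(f|_\Om)=1$ (so $f|_\Om$ is essentially a Blaschke product of degree one — an elliptic or hyperbolic Möbius-type boundary map) the maximal-measure argument degenerates, and one must instead choose a nonconstant Hölder potential on $\Sigma^k$ or work with a genuinely expanding subsystem inside $\Fr\Om$; this is where the work of \cite{Prz:06}, building on \cite{PZ:94}, is needed, replacing $\mu_{\max}$ by a more flexible equilibrium state and still extracting a hyperbolic repeller of dimension $>1$. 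Second, and this is the subtle point flagged already in \cite{Prz:05} and in the bullet after Theorem~\ref{tree-HD}: arranging the hyperbolic set $X$ produced by the Central Limit Theorem fluctuations to actually lie \emph{inside} $\cl\Lambda\subset\Fr\Om$ (rather than merely in $J(f)$) requires the careful refined construction of \cite{Prz:05}/\cite{PZ:94}. I expect that localization step — keeping the constructed repeller on the boundary of the single basin $\Om$ — to be the main technical obstacle.
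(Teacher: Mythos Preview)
Your overall architecture---code $\Fr\Om$ by a tree in $\Om$, push a Gibbs measure down to harmonic measure, and invoke Theorem~\ref{tree-HD}---is exactly the paper's, and you have correctly isolated the localization of the hyperbolic set inside $\Fr\Om$ as the real work. But the implementation has a genuine error: the claim that normalized length $l$ on $\partial\D$ is the measure of maximal entropy for the Blaschke product $g$ is false in general. With $g(0)=0$ one does have $g_*l=l$, and Rokhlin gives $h_l(g)=\int_{\partial\D}\log|g'|\,dl$; since $|g'|=\sum_j P_{a_j}$ (Poisson kernels at the zeros) one gets $\int|g'|\,dl=k$, and Jensen's inequality yields $h_l(g)\le\log k$ with equality only when $|g'|$ is constant, i.e.\ $g(z)=e^{i\theta}z^k$---precisely the excluded Blaschke case. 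So for the maps you care about, $l$ is \emph{not} maximal, and your choice $\phi\equiv-\log k$ produces the Bernoulli measure $\nu$ whose image $(z_\infty)_*\nu$ is the maximal-entropy measure on $\Fr\Om$, not $\omega$; its Hausdorff dimension has no reason to be $1$, and the ``Manning-type'' justification you cite does not apply (Manning's theorem is about the global $\mu_{\max}$ of a polynomial, which coincides with harmonic measure only for the basin of $\infty$).

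The fix, which is what the paper does, is to take $\phi(\alpha)=-\log|g'|\circ z'_\infty(\alpha)$ for the tree $\sT'=R^{-1}(\sT)$; this is H\"older because $g$ extends as an expanding map across $\partial\D$, and $P(\varsigma,\phi)=0$ because $l$ itself is the equilibrium/acim for $-\log|g'|$. Then $(z_\infty)_*\nu=\hat\omega$ genuinely, and $\HD(\hat\omega)=1$ follows from Ma\~n\'e's equality \eqref{mane} together with $h_{\hat\omega}(f)=h_l(g)$ and the equality of Lyapunov exponents $\int\log|f'|\,d\hat\omega=\int\log|g'|\,dl$ coming from \eqref{radial1}; no appeal to Manning (or to Makarov) is needed, and your worry about $\HD(\omega)<1$ evaporates. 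Two smaller points: the case $k=1$ cannot occur, since an immediate attracting basin always captures a critical point of the first-return map; and in the $\sigma^2=0$ branch your conclusion ``$f|_\Om$ is a Blaschke product in suitable coordinates'' is vacuous (it always is, via $R$)---what the cohomology equation actually forces is that $\Fr\Om$ is a real-analytic curve or arc, and only then does one argue that the \emph{global} $f$ is a Blaschke product or a two-to-one factor thereof.
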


The novelty was to show how to ``capture'' a large hyperbolic $X$ in $\Fr\Om$ in the case it was not the whole $J(f)$.

In fact the following ``local'' version of this theorem was proved in \cite{Prz:06}

\begin{theorem}\label{local-HD>1}
Assume that $f$ is defined and holomorphic on
a neighbourhood $W$ of $\Fr\Om$, where $\Omega$ is a connected, simply connected domain in $\ov\C$ whose
boundary has at least 2 points.   We assume that $f(W\cap \Om)\subset \Om$,\
$f(\Fr\Om)\subset\Fr\Om$ and $\Fr\Om$ repels
to the side of $\Om$, that is $\bigcap_{n=0}^\infty
f^{-n}(W\cap\cl\Om)=\Fr\Om$. Then either $\HD_{\hyp}(\Fr(\Om))>1$ or
$\Fr\Om$ is  a real-analytic Jordan curve or arc.
\end{theorem}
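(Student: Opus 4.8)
The plan is to reduce Theorem~\ref{local-HD>1} to the geometric-coding-tree machinery of Theorem~\ref{tree-HD}, applied to the measure of maximal entropy for the natural ``degree $d$'' model on $\Om$. Concretely, pick a point $z\in\Om$ sufficiently deep inside, and pull it back under the first-return-type dynamics of $f$ near $\Fr\Om$; since $\Fr\Om$ repels towards $\Om$, some iterate $f^N$ maps a neighbourhood of $\Fr\Om$ in $\cl\Om$ properly over a larger one with degree $d\ge 2$ (here I would first verify $d\ge2$: if $d=1$ the boundary map would be conjugate to a degree-one boundary map and one lands in the Blaschke/analytic-arc exceptional case). One then builds a geometric coding tree $\sT(z,\g^1,\dots,\g^d)$ with root $z$ inside $\Om$, whose curves $\g^j$ avoid all forward critical trajectories (possible since $\Crit(f)$ is finite and forward orbits of critical points form a countable set, while there is freedom in choosing the connecting arcs). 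The associated quasi-repeller $\La$ then satisfies $\La\subset\Fr\Om$, because all branches $b(\a)$ must converge to $\Fr\Om$ by the repelling-towards-$\Om$ hypothesis $\bigcap_n f^{-n}(W\cap\cl\Om)=\Fr\Om$.

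Next I would take $\nu$ to be the $(1/d,\dots,1/d)$-Bernoulli measure on $\Sigma^d$, i.e.\ the measure of maximal entropy $\log d$, so $\phi\equiv-\log d$ and $P(\varsigma,\phi)=0$. Set $\mu=(z_\infty)_*\nu$; by the ``thin''-to-one property of the coding (the discrete Beurling theorem cited after Definition~\ref{gct}) the coding preserves entropy, so $h_\mu(f)=\log d$. Harmonic measure on $\Fr\Om$ viewed from inside $\Om$ is exactly the $(z_\infty)_*$ (equivalently $\ov R_*$) image of normalized length, so in the polynomial case this $\mu$ is $\omega$ and one recovers Manning's $\HD(\mu)=1$; in the general local setting the relevant fact is that $\HD(\mu)\le 1$ — indeed $\mu$ is the image of length under a conformal-type boundary map, and by Makarov-type dimension bounds (or directly via the argument behind $\HD(\mu_{\max})=1$ for connected polynomial Julia sets, Manning) harmonic-type measure has dimension at most $1$. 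Then Theorem~\ref{tree-HD} gives the dichotomy: either $\sigma^2=\sigma^2_\nu(\psi)>0$ with $\psi=-\HD(\mu)(\log|f'|\circ z_\infty)-\phi$, in which case there is a hyperbolic repeller $X\subset\La\subset\Fr\Om$ with $\HD(X)>\HD(\mu)$, so $\HD_{\hyp}(\Fr\Om)\ge\HD(X)>1$ once we know $\HD(\mu)=1$; or $\sigma^2=0$.

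In the degenerate case $\sigma^2=0$, Theorem~\ref{tree-HD} (via the Liv\v{s}ic-type rigidity of \cite[Lemma~1]{PUZ:89}) says $\psi$ is cohomologous to $0$, hence $\log|f'|=v\circ f-v+\Const$ on $\cl\La$ for some $v$; by harmonicity $v$ extends across $\Fr\Om$ (the ``real case'' argument in Section~\ref{sec:gct}), forcing $\Fr\Om$ to lie on a union of real-analytic curves, and then a connectedness/return-map argument upgrades this to $\Fr\Om$ being a single real-analytic Jordan curve or arc — this is the only way the cohomological constraint on the orders of criticality can be satisfied. Here I would also need to rule out, or push into the exceptional conclusion, the cases where the model degree-$d$ structure forces a Blaschke-product form; that overlaps with Theorem~\ref{HD>1}, whose proof in \cite{Prz:06} I would follow closely.

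The main obstacle I expect is \emph{localizing the hyperbolic set inside $\Fr\Om$} rather than merely inside $J(f)$ or $\cl\La$. The Central-Limit-Theorem fluctuation argument naturally produces a hyperbolic $X$ with $\HD(X)>\HD(\mu)$ somewhere in the ambient Julia set, but ensuring $X\subset\Fr\Om$ requires controlling that the constructed pseudo-orbits (realized via the fluctuations of $S_n\psi$) stay in the branches of the tree, which stay in $\cl\Om$ by the repelling hypothesis; this is exactly the ``special care'' flagged after Theorem~\ref{tree-HD}, carried out in \cite{Prz:05} building on \cite{PZ:94}, and adapting that construction to the purely local hypothesis on $W$ (no global rational $f$, no global Julia set) is the technical heart of the proof. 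A secondary difficulty is verifying $\HD(\mu)\le 1$ purely locally, i.e.\ the Makarov-type bound for the image of length under the tree coding, which in the absence of a globally defined Riemann map $R:\D\to\Om$ one must extract from the conformal geometry of the branches of $\sT$ directly.
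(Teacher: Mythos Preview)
Your reduction to Theorem~\ref{tree-HD} is the right overall strategy, and your discussion of the $\sigma^2=0$ rigidity case and of the localization issue $X\subset\La\subset\Fr\Om$ is on target. But there is a genuine gap in the choice of the measure $\nu$, and it is exactly the point where the paper's argument diverges from yours.

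You take $\nu$ to be the $(1/d,\dots,1/d)$-Bernoulli measure, i.e.\ $\phi\equiv-\log d$, and then need $\HD(\mu)=1$ for $\mu=(z_\infty)_*\nu$. This $\mu$ is the measure of maximal entropy for the boundary dynamics, and it coincides with harmonic measure \emph{only} when the disc model $g$ is $z\mapsto z^d$ (the polynomial case). In the general RB-domain setting $g=R^{-1}\circ f\circ R$ is merely an expanding degree-$d$ circle map, and for such $g$ the measure of maximal entropy is typically \emph{not} Lebesgue; correspondingly $\chi_f(\mu)$ has no reason to equal $\log d$, so Ma\~n\'e's formula gives no control on $\HD(\mu)$ and your appeal to ``Makarov-type bounds'' does not apply (Makarov's theorem is about harmonic measure, not about $\mu_{\max}$). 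If $\HD(\mu)<1$ the conclusion $\HD(X)>\HD(\mu)$ is useless, and $\HD(\mu)<1$ does occur.

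The paper fixes this by exploiting the Riemann map $R:\D\to\Om$, which \emph{does} exist globally since $\Om$ is assumed simply connected (your final paragraph suggests you overlooked this). One transports the tree to $\sT'=R^{-1}(\sT)$ and takes $\phi(\a)=-\log|g'|\circ(R^{-1}z)_\infty(\a)$, which is H\"older because $g$ extends to an expanding map across $\partial\D$. The Gibbs measure $\nu_\phi$ then corresponds to the \emph{a.c.i.m.} $\hat l$ for $g$ on $\partial\D$, so $\mu=(z_\infty)_*\nu_\phi=\ov R_*\hat l=\hat\omega$ is the $f$-invariant version of harmonic measure. The crucial computation is the equality of Lyapunov exponents $\int\log|f'|\,d\hat\omega=\int\log|g'|\,d\hat l$, obtained from $f\circ R=R\circ g$ and the harmonicity of $\log|R'|$ (see \eqref{radial1}); combined with $h_{\hat\omega}(f)=h_{\hat l}(g)=\int\log|g'|\,d\hat l$ (Rokhlin's formula for the SRB measure on $\partial\D$) and Ma\~n\'e's equality, this gives $\HD(\hat\omega)=1$ exactly. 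With that in hand, Theorem~\ref{tree-HD} finishes the $\sigma^2>0$ case as you outlined.
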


$\Om$ with $f$ as above has been called an {\it RB-domain} (repelling boundary), introduced in \cite{Prz:86, PUZ:89}. Theorem \ref{local-HD>1} (at least the $\sigma^2>0$ part) follows directly from Theorem \ref{tree-HD}. Let $R:\D\to\Omega$ be a Riemann map  and $g: W'\to\D$ be defined by $g:=R^{-1}\circ f \circ R$ on $W'=R^{-1}(W\cap\Om)$.
We consider a g.c.t. ${\sT}=\sT(z,\g^1,\dots,\g^d)$ with $z$ and $\g^j$ in $W\cap\Om$,  sufficiently close to $\Fr\Omega$ that the definition makes sense, and with $d=\deg f|_{W\cap\Om}$,
(the situation is the same as in Remarks in  Section \ref{sec:gct} above, but the order of defining $f$ and $g$ is different). Consider the g.c.t. ${\sT}'=R^{-1}(\sT)$.  The function $g$ extends holomorphically beyond the circle $\partial \D$ and it is expanding. 
Hence
$\phi:\Sigma^d\to \mathbb{R}$ defined by $\phi(\alpha)=-\log |g'|\circ (R^{-1}(z))_\infty(\a)$ for the tree ${\sT}'$ is H\"older continuous. Let $\nu=\nu_\phi$.

Note that here $P(\phi)=0$, e.g. since by expanding property of $g$ on $\partial\D$ there exists
$\hat l\in\cM(g)$, equivalent to length measure $l$ (a.c.i.m.).
Then $\nu$ is the lift of $\hat l$ to $\Sigma^d$ with the use of $\sT'$.
 Note that our  $\mu=z_\infty(\nu)$ is equal to $\hat\omega=\ov{R}_*(\hat l)$ which is $f$-invariant, equivalent to harmonic measures $\omega$ on $\Fr \Om$ viewed from $\Om$. 

 Note that $\HD(\hat\omega)=1$ due to Ma\~n\'e's equality, \eqref{mane},  $h_{\hat\omega}(f)=h_{\hat l}(g)$, see \cite{Prz:85, Prz:86}, and  the equality of Lyapunov exponents $\int\log|f'|\,d\hat\omega = \int \log |g'|\,d{\hat l} > 0$. The latter equality holds due to the equality for almost every $\zeta\in \partial\D$:

\begin{equation}\label{radial1}
\lim_{r\to 1}\frac {\log |(f^n)'(R(r\z))|-\log |(g^n)'(r\z))}{\log (1-r)} =\lim_{r\to 1}
\frac{-\log |R'(r\z)|}
{\log (1-r)}=0.
\end{equation}
The first equality is proved using $f\circ R=R\circ g$ in $\D$, first applying $R$ close to $\partial \D$, next by iterating $f$ applying $R^{-1}$ well inside $\Om$, finally  iterating $g$ back. The latter equality relies on the harmonicity of $\log|R'|$ allowing to replace its integral on circles by its value at the origin. For details see \cite{Prz:86}. Remind however that in fact $\HD(\omega)=1$ holds in general, see \cite{Makarov}.


The sketch of Proof of Theorem \ref{local-HD>1} for $\sigma^2>0$ is over. That $\sigma^2=0$ implies the analyticity of $\Fr\Om$ was already commented at the beginning of this Section.

\section{Law of Iterated Logarithm refined versions}\label{LIL}

Applying Law of Iterated Logarithm (abbr. LIL) to $\psi: \Sigma^d\to \mathbb{R}$   the fluctuations of $S_n\psi$ from 0 which follow lead to, see \cite{PUZ:89} and \cite{PUbook}, 

\begin{theorem}\label{LIL-refined-HD}
In the setting of Theorem \ref{tree-HD}
if $\s^2=\s^2_\nu(\psi)>0$, for $c(\mu):=\sqrt{2\s^2/\chi(\mu)}$, $\kappa:=\HD(\mu)$ and
$\a_c(r):=r^\kappa\exp(c\sqrt{\log 1/r\log\log\log 1/r})$

1) $\mu \bot H_{\a_c}$, that is singular with respect to the refined Hausdorff measure, \cite[Section 8.2]{PUbook} for the gauge function $\a_c$), for all $0<c<c(\mu)$;

2) $\mu \ll H_{\a_c}$, that is absolutely continuous, for all $c>c(\mu)$.

\end{theorem}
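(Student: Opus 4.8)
The plan is to transport the classical refined Law of Iterated Logarithm from the symbolic space $(\Sigma^d,\varsigma,\nu)$ to $\La$ via the coding map $z_\infty$, using Ma\~n\'e's equality and the bounded-distortion geometry of the tree branches to translate fluctuations of the Birkhoff sums $S_n\psi$ into fluctuations of the logarithmic measure-to-radius ratio for $\mu$. First I would recall that by Theorem \ref{tree-HD} the function $\psi=-\kappa(\log|f'|\circ z_\infty)-\phi$ has $\int\psi\,d\nu=0$, and $\sigma^2=\sigma_\nu^2(\psi)>0$ by hypothesis; since $\nu$ is Gibbs for a H\"older potential, $\psi$ is (cohomologous to) a H\"older function with positive asymptotic variance, so the LIL applies in the sharp two-sided form: for $\nu$-a.e.\ $\alpha$,
\begin{equation}\label{LILsymb}
\limsup_{n\to\infty}\frac{S_n\psi(\alpha)}{\sqrt{2n\log\log n}}=\sigma,\qquad
\liminf_{n\to\infty}\frac{S_n\psi(\alpha)}{\sqrt{2n\log\log n}}=-\sigma.
\end{equation}
The key dictionary is then: for $\mu$-typical $x=z_\infty(\alpha)$, the $n$-th tree preimage piece containing $x$ has diameter comparable (by bounded distortion, as in \eqref{diam}) to $|(f^n)'(x)|^{-1}\approx\exp(-S_n\log|f'|(x))$, and by the Gibbs property of $\nu$ together with $P(\varsigma,\phi)=0$ its $\mu$-mass is comparable to $\exp(S_n\phi(\alpha))$. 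Writing $r_n:=\diam(\text{piece})$ and using $S_n\psi=-\kappa S_n\log|f'|-S_n\phi$, one gets $\log\mu(B(x,r_n))=\kappa\log r_n+S_n\psi(\alpha)+O(1)$, i.e.\ $\log\mu(B(x,r_n))-\kappa\log r_n$ is exactly $S_n\psi(\alpha)$ up to a bounded error. By Birkhoff applied to $\log|f'|\circ z_\infty$, $-\log r_n\sim n\chi(\mu)$ for $\mu$-a.e.\ $x$; substituting $n\approx(-\log r_n)/\chi(\mu)$ into \eqref{LILsymb} converts the $\sqrt{2n\log\log n}$ normalization into $\sqrt{(2\sigma^2/\chi(\mu))\log(1/r_n)\log\log\log(1/r_n)}=c(\mu)\sqrt{\log(1/r_n)\log\log\log(1/r_n)}$, which is precisely the exponent defining $\alpha_c$.

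From this one reads off both parts. For part 1), if $0<c<c(\mu)$, then for $\mu$-a.e.\ $x$ the upper LIL gives a sequence $r_n\to0$ along which $\log\mu(B(x,r_n))\le\kappa\log r_n+(c'\!)\sqrt{\log(1/r_n)\log\log\log(1/r_n)}$ with $c<c'<c(\mu)$, i.e.\ $\mu(B(x,r_n))\le\alpha_c(r_n)$ for infinitely many $n$; hence $\liminf_{r\to0}\mu(B(x,r))/\alpha_c(r)=0$ on a set of full $\mu$-measure, and the standard density/covering comparison (Rogers--Taylor, as used in \cite[Section 8.2]{PUbook}) yields $\mu\perp H_{\alpha_c}$. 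For part 2), if $c>c(\mu)$, the full two-sided LIL bounds $S_n\psi(\alpha)$ from below eventually by $-c''\sqrt{n\log\log n}$ with $c(\mu)<c''$ so that $c''<c$ after the change of variables, giving $\mu(B(x,r))\ge\alpha_c(r)$ for all sufficiently small $r$, for $\mu$-a.e.\ $x$; then the reverse Rogers--Taylor inequality gives $\mu\ll H_{\alpha_c}$. The comparison of $\mu(B(x,r))$ at a general scale $r$ with $\mu$ of the tree piece at the comparable combinatorial scale is routine thanks to bounded distortion and the fact that $\nu$, being Gibbs, has at most polynomially (indeed boundedly) many pieces of a given generation meeting a ball of comparable size; I would invoke \eqref{Gibbs-eq}/\eqref{diam} here rather than redo it.

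The main obstacle is the passage from combinatorial scales $r_n$ (diameters of tree pieces) to arbitrary radii $r$, together with controlling the geometry of $z_\infty$ near the exceptional ``thin'' set where branches may fail to converge or the coding is not finite-to-one: one must ensure that balls $B(x,r)$ around $\mu$-typical $x$ are genuinely sandwiched between two consecutive tree pieces of comparable mass, which requires the bounded-distortion property for iterates and the quasi-repeller structure of $\La$ (so that pullbacks of a fixed disc shrink with controlled eccentricity). A secondary technical point is that $\psi$ is only H\"older, not locally constant, so \eqref{LILsymb} is the H\"older-potential LIL rather than the Bernoulli one — but this is standard for Gibbs measures on subshifts and is exactly the input already used in \cite{PUZ:89}. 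Once the dictionary $\log\mu(B(x,r_n))-\kappa\log r_n=S_n\psi(\alpha)+O(1)$ is in place, both singularity and absolute continuity are immediate consequences of the sharp constant $\sigma$ in the LIL and the definition $c(\mu)=\sqrt{2\sigma^2/\chi(\mu)}$.
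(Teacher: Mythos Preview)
Your overall plan matches the paper's: apply LIL on $(\Sigma^d,\nu)$ to $\psi$, convert $n$ to $\log(1/r_n)/\chi(\mu)$ via Birkhoff, and read off the Refined Volume Lemma \eqref{sing-vs-cont}. But the execution has the density comparison theorems backwards, and this is a genuine gap.

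First, the sign in your dictionary is wrong. From $r_n\asymp\exp(-S_n\log|f'|)$ and $\mu(\text{piece}_n)\asymp\exp S_n\phi$ (Gibbs with $P(\phi)=0$) together with $S_n\psi=-\kappa S_n\log|f'|-S_n\phi$, one gets
\[
\log\mu(B(x,r_n))=\kappa\log r_n-S_n\psi(\alpha)+O(1),
\]
not $+S_n\psi$. Since LIL is symmetric this alone would only swap the roles of $\limsup$ and $\liminf$, but it feeds into the more serious error below.

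For part 1) you conclude $\liminf_{r\to0}\mu(B(x,r))/\alpha_c(r)=0$ and claim this yields $\mu\perp H_{\alpha_c}$. It does not: the Rogers--Taylor/Frostman comparison says that $\limsup_{r\to0}\mu(B(x,r))/h(r)=\infty$ on a set $E$ forces $H_h(E)=0$ (hence singularity), whereas $\liminf=0$ points the other way (it makes $H_h$ large on $E$). What is actually needed for $0<c<c(\mu)$ is the \emph{lower} LIL ($\liminf S_n\psi/\sqrt{2n\log\log n}=-\sigma$), which with the correct sign gives a subsequence with $\log\mu-\kappa\log r_n\approx c(\mu)\sqrt{\dots}>c\sqrt{\dots}$, i.e.\ $\limsup\mu(B)/\alpha_c=\infty$; this is exactly \eqref{sing-vs-cont}. (Incidentally, even within your own computation, from $\log\mu\le\kappa\log r_n+c'\sqrt{\dots}$ with $c'<c(\mu)$ and $c<c'$ you cannot deduce $\mu\le\alpha_c(r_n)$, since $\alpha_{c'}(r)>\alpha_c(r)$.)

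For part 2) the reversal is the same. You argue $\mu(B(x,r))\ge\alpha_c(r)$ eventually and say this gives $\mu\ll H_{\alpha_c}$; but a lower bound on $\mu(B)/\alpha_c$ only bounds $H_{\alpha_c}$ from above on $\supp\mu$ and says nothing about $\mu$-null sets. Absolute continuity requires an \emph{upper} bound: for $c>c(\mu)$ the eventual two-sided bound $|S_n\psi|\le (c(\mu)+\varepsilon)\sqrt{\dots}$ with $c(\mu)+\varepsilon<c$ gives $\mu(B(x,r))\le\alpha_{c(\mu)+\varepsilon}(r)=o(\alpha_c(r))$, hence $\limsup\mu(B)/\alpha_c=0$ as in \eqref{sing-vs-cont}, and then the standard density theorem yields $\mu\ll H_{\alpha_c}$. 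This ``eventual upper bound for all $r$'' is precisely the harder case the paper flags (\cite[Section 5]{PUZ:91}): one must pass from the combinatorial scales $r_n$ to \emph{all} small $r$, which is where the bounded-distortion sandwich you mention is genuinely used.
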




Indeed, substituting in LIL $n\sim (\log 1/r_n)/\chi(\mu)$
for $r_n = |(f^n)'(z)|^{-n}$, we get for $\mu$-a.e. $z$ 
\begin{equation}\label{sing-vs-cont}
\limsup_{n\to\infty}\frac{\mu(B(z,r_n)}{\a_c(r_n)}=\infty
\ {\rm for} \ 0<c<c(\mu) \
\ \ {\rm and}\ \ \dots =0 \ {\rm for} \ c>c(\mu).
 \end{equation}

This is called the Refined Volume Lemma,  \cite[Section 4]{PUZ:89} and, the harder case: $c>c(\mu)$, \cite[Section 5]{PUZ:91}. 

\medskip


We can apply the assertion of Theorem \ref{LIL-refined-HD} for $\mu=\hat\omega\in\cM(f,\Fr\Om)$ equivalent to a harmonic measure $\omega$ as in Section \ref{sec:bound}.

\smallskip

This yields refined information about the radial growth of the derivative of Riemann maps, following the proof of \eqref{radial1}:

\begin{theorem}\label{radial growth}
 Let  $\Om$ be a simply connected RB-domain in $\ov\C$ with non-analytic boundary and $R:\D\to\Om$ be a Riemann map. Then there exists $c(\Om)>0$ such that for Lebesgue a.e. $\z\in\partial \D$
 \begin{equation}\label{radial}
 G^+(\z):=\limsup_{r\to 1}\frac{\log |R'(r\z)|}{\sqrt{\log(1/1-r)\log\log\log(1/1-r)}}=c(\Omega).
\end{equation}
Similarly  $G^-(\z):=\liminf \dots= -c(\Omega)$. Finally $c(\Omega)=c(\hat\omega)$ in
Theorem \ref{LIL-refined-HD}.
\end{theorem}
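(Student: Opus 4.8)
The plan is to reduce Theorem~\ref{radial growth} to Theorem~\ref{LIL-refined-HD} applied to the harmonic measure $\hat\omega$ on $\Fr\Om$, by transporting the refined Volume Lemma for $\hat\omega$ through the Riemann map $R$ into a statement about the radial growth of $|R'|$. Concretely, I would set $g:=R^{-1}\circ f\circ R$ as in Section~\ref{sec:bound} (choosing any holomorphic $f$ with $\Fr\Om$ an RB-domain, e.g.\ built from a Blaschke product if nothing else is available), let $\nu=\nu_\phi$ be the Gibbs measure on $\Sigma^d$ coding the boundary via the tree $\sT'=R^{-1}(\sT)$, and $\mu=\hat\omega=(z_\infty)_*\nu$. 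By the computation already carried out, $\HD(\hat\omega)=\kappa=1$, and Theorem~\ref{LIL-refined-HD} gives the dichotomy $\hat\omega\bot H_{\alpha_c}$ for $c<c(\hat\omega)$ and $\hat\omega\ll H_{\alpha_c}$ for $c>c(\hat\omega)$, equivalently \eqref{sing-vs-cont}:
\[
\limsup_{n\to\infty}\frac{\hat\omega(B(z,r_n))}{\alpha_c(r_n)}=\begin{cases}\infty,&0<c<c(\hat\omega),\\[1mm]0,&c>c(\hat\omega),\end{cases}
\]
for $\hat\omega$-a.e.\ $z\in\Fr\Om$, where $r_n=|(f^n)'(z)|^{-1}$ along the orbit coded by $\alpha$ with $z=z_\infty(\alpha)$, and $n\sim(\log 1/r_n)/\chi(\hat\omega)$.

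Second, I would translate this into the $\D$-picture. Since $\hat\omega=\ov R_*(\hat l)$ with $\hat l$ equivalent to normalized length on $\partial\D$, a Lebesgue-a.e.\ $\zeta\in\partial\D$ corresponds to $z=\ov R(\zeta)\in\Fr\Om$; and the radial approach $r\to 1$ along $\zeta$ corresponds — using the distortion theory for univalent maps — to $R(r\zeta)$ being comparable to a point at Euclidean distance $\asymp(1-r)|R'(r\zeta)|$ from $\Fr\Om$, hence sitting inside a ``shadow'' ball $B(z,\rho)$ with $\log\rho\sim\log(1-r)+\log|R'(r\zeta)|$. The key bridge is exactly \eqref{radial1}, which along a backward-forward pull-back argument ($f\circ R=R\circ g$ applied near $\partial\D$, then $f$ iterated, then $R^{-1}$ well inside $\Om$, then $g$ back) gives
\[
\log|(f^n)'(R(r\zeta))|=\log|(g^n)'(r\zeta)|+o(\log(1/(1-r)))
\]
together with the Koebe/harmonicity estimate $\log|R'(r\zeta)|=o(\log(1/(1-r)))$ used there; here we need the sharper, LIL-scale version, namely that the error terms are $o\big(\sqrt{\log(1/(1-r))\log\log\log(1/(1-r))}\big)$ and in particular do not interfere with the $\alpha_c$-gauge. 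Under the correspondence $1-r\asymp(1-|g(r\zeta)|)$-type iteration, the discrete sequence $r_n$ and the continuous parameter $r$ interpolate, and the ratio $\hat\omega(B(z,\rho))/\rho^{1}$ (recall $\kappa=1$) becomes, via the boundary Harnack / standard estimate $\hat\omega(B(\ov R(\zeta),\rho))\asymp(1-r)$ when $\rho\asymp(1-r)|R'(r\zeta)|$, essentially $(1-r)/\big((1-r)|R'(r\zeta)|\big)=|R'(r\zeta)|^{-1}$, so that $\hat\omega(B(z,\rho))/\alpha_c(\rho)$ behaves like $\exp\!\big(-\log|R'(r\zeta)| - c\sqrt{\log(1/\rho)\log\log\log(1/\rho)}\big)$.

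Third, I would extract \eqref{radial} by matching exponents. From the $c<c(\hat\omega)$ case of \eqref{sing-vs-cont} one gets $G^+(\zeta)\ge c(\hat\omega)$ for a.e.\ $\zeta$; from the $c>c(\hat\omega)$ case one gets $G^+(\zeta)\le c(\hat\omega)$; hence $G^+(\zeta)=c(\hat\omega)=:c(\Om)$ for Lebesgue-a.e.\ $\zeta\in\partial\D$, and by the same argument applied to $\liminf$ (using the two-sided LIL) $G^-(\zeta)=-c(\Om)$. The positivity $c(\Om)>0$ is equivalent to $\sigma^2=\sigma^2_\nu(\psi)>0$, which by the $\sigma^2=0$ alternative in Theorem~\ref{tree-HD} fails precisely when $\Fr\Om$ is a real-analytic Jordan curve or arc — excluded by the non-analyticity hypothesis — so $c(\Om)>0$. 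The main obstacle, and the step that requires genuine care rather than bookkeeping, is the passage in the second paragraph: controlling the discrepancy between the \emph{discrete} geometry (balls $B(z,r_n)$ measured by $\hat\omega$, with $r_n$ tied to $|(f^n)'|$ along a coded orbit) and the \emph{continuous} radial geometry in $\D$ (the parameter $r\to 1$ and $|R'(r\zeta)|$), \emph{at the precision of the iterated-logarithm gauge} — i.e.\ showing that the gaps between consecutive $r_n$, the distortion constants from Koebe, the comparison $\hat\omega(\text{shadow})\asymp(1-r)$, and the error terms in \eqref{radial1} are all $o\big(\sqrt{\log(1/(1-r))\,\log\log\log(1/(1-r))}\big)$ and therefore invisible to $\alpha_c$. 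This is exactly the content of the Refined Volume Lemma machinery of \cite[Section 4]{PUZ:89} and \cite[Section 5]{PUZ:91}, and I would invoke it, checking that the hypotheses (H\"older coding, $\chi>0$, Gibbs property, exponential shrinking) hold here because $g$ is expanding on $\partial\D$; the rest is the translation via $f\circ R=R\circ g$ already detailed around \eqref{radial1}.
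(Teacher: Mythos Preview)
Your plan is essentially the paper's own: apply Theorem~\ref{LIL-refined-HD} with $\mu=\hat\omega$ and then transport the refined Volume Lemma through $R$ via the chain-rule identity underlying \eqref{radial1}, with $\sigma^2>0$ coming from the non-analyticity of $\Fr\Om$ as in Section~\ref{sec:bound}. Two small caveats worth cleaning up: (i) the holomorphic $f$ is \emph{given} by the RB-domain hypothesis --- conjugating an arbitrary Blaschke product by $R$ would not in general produce a map extending holomorphically across $\Fr\Om$, so your parenthetical ``built from a Blaschke product if nothing else is available'' should be dropped; (ii) with your translation $\hat\omega(B)/\alpha_c\asymp |R'|^{-1}\exp(-c\sqrt{\dots})$, the dichotomy \eqref{sing-vs-cont} as written pins down $G^-=-c(\hat\omega)$ rather than $G^+$, so the bookkeeping in your third paragraph is swapped --- both halves of course follow from the two-sided LIL you invoke.
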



In fact Theorem \ref{LIL-refined-HD} for $\mu=\hat\omega$ and Theorem \ref{radial growth} hold for every connected, simply connected open $\Om\subset\C$, together with $c(\Omega)=c(\hat\omega)$. No dynamics is needed. Of course one should add to both definitions
${\rm ess}\sup$ over $\z\in\partial\D$ and over $z\in \Fr\Om$ (for $c(z)=c(\omega)$ calculated from \eqref{sing-vs-cont}, see \cite[Th. 8.6.1]{PUbook}  ) respectively, since in the absence of ergodicity these functions need not be constant. See \cite[Th. VIII.2.1 (a)]{GaMa} and references to Makarov's breakthrough papers therein, in particular \cite{Makarov}.


There is a universal Makarov's upper bound $C_{\rm M}<\infty$ for all $c(\Omega), c(\hat\omega)$'s in \eqref{radial}.
The best upper estimate I found in literature is $C_{\rm M}\le 1.2326$,  \cite{HK}. I proved in \cite{Prz:89} a much weaker estimate, using a natural method of representing $\log |R'|$ by a series of weakly dependent random variables leading to a martingale on $\partial\D$, thus satisfying LIL. Unfortunately consecutive approximations resulted with looses in the final estimate.



For a holomorphic expanding repeller $f:X\to X$ and a  H\"older continuous potential $\phi:X\to X$, the asymptotic variance for the equilibrium state $\mu=\mu_{t_0\phi}$ for every $t_0\in \mathbb{R}$ satisfies Ruelle's formula (see \cite{PUbook}):
\begin{equation}\label{second_derivative}
\sigma^2_\mu(\phi-\int\phi\,d\mu)=\frac{d^2P(t\phi)}{dt^2} \biggl |_{t=t_0}.
\end{equation}
{\bf Question.} Does \eqref{second_derivative} hold for all rational maps and hyperbolic potentials on Julia sets? For all simply connected RB-domains, $f:\Fr\Om\to\Fr\Om$ and $\mu=\hat\omega$?

\smallskip

For a simply connected RB-domain $\Om$ for $f$ and for $\phi=-\log|f'|$, if $g(z)=z^d$ (e.g.~$\Om$ being  the basin of $\infty$ for a polynomial $f$), one considers the {\it integral means spectrum} depending only on $\Om$,
\begin{equation}
\beta_{\Om}(t):= \limsup_{r\to 1}\frac1{|\log (1-r)|}\log\int_{\z\in\partial\D}|R'(r\z)|^t\, |d\z|
\end{equation}
which happens to satisfy $\beta_{\Om}(t)=t-1+\frac{P(t\phi)}{\log d}$, see e.g.~\cite[Eq. (9.6.2.)]{PUbook}.

For $t_0=0$ we have $\mu=\hat\omega$ and the left hand side of \eqref{second_derivative} can be written as $(\frac12 \log d)\sigma^2(\log R')$,   
see \eqref{s2} and \eqref{radial1}, where
$$\sigma^2(\log R'):=
 \limsup_{r\to 1} \frac{\int_{\partial\D}|\log R'(t\z)|^2\, |d\z|}{-2\pi \log(1-r)|}.
$$  
 So \eqref{second_derivative} changes to $\sigma^2(\log R')=2\frac{d^2\beta_{\Om}(t))}{dt^2}|_{t=0}$,
 compare \cite{Ivrii}.
 It has an analytic, non-dynamical, meaning. It is also
 related to the Weil-Petersson metric, see \cite{McM}.





\section{Accessibility}\label{sec:acc}

Let us recall the following  theorem from \cite{Prz:94}.

\begin{theorem}\label{access}

Let 
$\La$ be a quasi-repeller for a geometric coding tree
for a
holomorphic  map $f:U\to\ov\C$. Suppose that
\begin{equation}\label{shrink}
\diam (\g_n(\a)) \to 0 , \ \hbox{as} \  n\to
\infty
\end{equation}
uniformly with respect to $\a\in\S^d$.
Then every {\it good} $q\in\widehat\La$ (defined in Section \ref{sec:gct}) is a limit of a convergent
branch $b(\a)$. So $q\in\Lambda$. In particular, this holds for every $q$ with $\underline{\chi}(q)>0$ and the local backward inviariance (explained below).
\end{theorem}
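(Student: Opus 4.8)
The plan is to show that a "good" point $q\in\widehat\Lambda$ — meaning a point that is an accumulation point of vertices $z_{n_k}(\alpha^{(k)})$ along some sequence, and for which one has enough backward contraction — is actually the limit of a single convergent branch, hence lies in $\Lambda=z_\infty(\sD)$. First I would fix $q\in\widehat\Lambda$ and a sequence of vertices $z_{n_k}(\alpha^{(k)})\to q$. By a diagonal/compactness argument on the shift space $\Sigma^d$, passing to a subsequence, I may assume the finite initial words $(\alpha^{(k)}_0,\dots,\alpha^{(k)}_{n_k-1})$ stabilize, i.e.\ there is $\alpha\in\Sigma^d$ with $\alpha^{(k)}_j=\alpha_j$ for all $j<n_k$ (after refining). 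This is the natural candidate branch. The claim then reduces to: the branch $b(\alpha)$ converges, and its limit is $q$.

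The key analytic step is a \emph{shadowing / distortion} argument near $q$. The hypothesis that $q$ is good — for instance $\underline{\chi}(q)>0$ together with local backward invariance — gives, along the relevant times, a definite-size disc $B(f^{n}(\cdot),r)$ near the orbit of $q$ on which the inverse branch of $f^{n}$ sending the basepoint back toward $z$ is univalent with bounded distortion, and whose image shrinks exponentially (this is exactly the ExpShrink-type mechanism, cf.\ \eqref{ExpShrink} and Lemma \ref{av_dist} controlling how often the orbit comes close to $\Crit(f)$). Concretely, I would take the vertex $z_{n_k}(\alpha^{(k)})$, which is within $o(1)$ of $q$; apply the univalent inverse branch $f_{\cdot}^{-m}$ of $f^m$ for a fixed moderate $m$ (or a slowly growing $m$) adapted to $q$; use the Koebe distortion theorem to control the geometry; and conclude that the tail of the curves $\gamma_n(\alpha)$ for $n$ between $n_k$ and $n_{k+1}$ all lie in a set of diameter $\to 0$. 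Summing these estimates over $k$ shows $(z_n(\alpha))_n$ is Cauchy, so $b(\alpha)$ converges, and since its vertices track $q$ along the subsequence $n_k$, the limit is $q$. The uniform shrinking hypothesis \eqref{shrink} is what lets me control the curves $\gamma_n(\alpha)$ themselves (not just the vertices $z_n(\alpha)$), ensuring the limit of the branch exists as a genuine point of $\cl U$.

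The last assertion — that every $q$ with $\underline{\chi}(q)>0$ (plus local backward invariance) is good — is where I would invoke the non-uniform hyperbolicity machinery of Section 4: positive lower Lyapunov exponent at $q$ forces, for a positive-density set of times $n$, exponential shrinking of the pullback components of a fixed-radius disc at $f^n(q)$ back to $q$ with at most boundedly many critical encounters, which is precisely the TCE/ExpShrink package (cf.\ \eqref{TCE}, and compare \cite[Section 3]{LPS}). Local backward invariance guarantees that these pullbacks stay in $U$ so that the tree's curves can be pulled back alongside the orbit of $q$. I would spell out "good" as: there exist $r>0$, $\theta>0$, $C>0$ and arbitrarily large $n$ with $\diam(\Comp_{q}f^{-n}(B(f^n(q),r)))\le C e^{-\theta n}$ and the pullback univalent or of bounded degree — and then the distortion argument above applies verbatim.

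The main obstacle I expect is the \textbf{interaction with critical points}: the curves $\gamma^j$ of the tree are only assumed disjoint from $\bigcup_{n>0}f^n(\Crit f)$, so pulling them back along the orbit of $q$ is unproblematic \emph{as curves}, but the inverse branches $f^{-n}$ defining $z_n(\alpha)$ can fail to be univalent when the orbit of $q$ passes near $\Crit f$, breaking the clean Koebe estimate. Handling this requires the bounded-degree / bounded-number-of-critical-encounters version of the pullback (the $M'$ in the complex reformulation of \eqref{TCE}) and the average-distance estimate of Lemma \ref{av_dist} to absorb the few bad steps into constants. A secondary subtlety is matching the subsequence $n_k$ of "approach times" to $q$ with the subsequence of "good times" coming from $\underline{\chi}(q)>0$; one has to interleave them, which only costs replacing exponential shrinking by shrinking along a sparser sequence — still enough for the Cauchy conclusion since diameters are summable once they decay geometrically along a positive-density set.
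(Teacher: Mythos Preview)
There is a genuine gap in your diagonal/compactness step. From $z_{n_k}(\alpha^{(k)})\to q$ you cannot extract, by compactness of $\Sigma^d$ alone, a single $\alpha$ with $\alpha^{(k)}_j=\alpha_j$ for all $j<n_k$. Compactness of $\Sigma^d$ gives you a subsequential limit $\alpha$, i.e.\ agreement on any \emph{fixed} initial segment for $k$ large, not agreement on the growing segments of length $n_k$; and since the vertex $z_{n_k}(\cdot)$ depends on precisely those first $n_k{+}1$ coordinates, the branch $b(\alpha)$ you produce has no a priori relation to $q$ whatsoever. (Concretely: the approaching vertices could come from words $\alpha^{(k)}$ that disagree with one another already at coordinate $0$.) In effect this step assumes the conclusion, and the subsequent shadowing paragraph does not rescue it because it presupposes that the vertices $z_n(\alpha)$ of the chosen branch already track $q$.

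The argument the paper refers to (from \cite{Prz:94}) runs in the opposite direction: the branch is \emph{constructed}, not extracted. ``Good'' means there is a positive-lower-density set of times $n$ for which $f^n$ maps a small domain $D_{n,0}\ni q$ \emph{properly} onto a \emph{large} domain $D_n\subset U$ --- large enough to swallow the root $z$ and the initial edges of the tree. One then builds ``telescopes'' ${\rm Tel}_k$ with nested traces $D_{n_k,0}\subset D_{n_{k-1},0}\subset\cdots\subset D_0$ (the choice of $n_k$ may depend on the earlier steps), and at stage $k$ pulls back the relevant piece of the tree from $D_{n_k}$ by the branch of $f^{-n_k}$ landing in $D_{n_k,0}$. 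The nesting forces these pulled-back pieces to concatenate into a single infinite branch $b(\alpha)$, and since the traces shrink to $\{q\}$ the branch converges to $q$; the uniform hypothesis \eqref{shrink} controls the edges between consecutive good times. Your instincts about bounded-degree pullbacks and positive-density good times are on target for the implication ``$\underline\chi(q)>0$ plus local backward invariance $\Rightarrow$ good'', but the core of the accessibility proof is this telescope construction rather than a compactness argument on $\Sigma^d$.
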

For the definition of ``good'', see \cite[Definition 2.5]{Prz:94}. It roughly says that there are many integers $n$ (positive lower density) for which $f^n$ properly map small domains $D_{n,0}$ in $U$ close to $q$ onto large $D_n\subset U$, giving  ``telescopes'' ${\rm Tel}_k$ with ``traces'' $D_{n_k,0}\subset D_{n_{k-1},0}\subset\dots\subset D_{n_1,0}\subset D_0$; for each $k$ the choices may be different.
A part of this condition that $D_{n,0}\subset U$ can be called   a ``local backward invariance'' of $U$ along the forward trajectory of $q$.

When $U$ is an immediate basin of attraction of an attracting fixed point for a rational map $f$ or just an RB-domain then this theorem asserts that $q$ is an endpoint of a continuous curve in $U$.  This is a generalization of the Douady-Eremenko-Levin-Petersen theorem where $q$ is a repelling periodic point and the domain is completely invariant, e.g. basin of attraction to $\infty$ for $f$ a polynomial.

Due to this theorem we can prove that invariant measures of positive Lyapunov exponents lift to $\Sigma^d$.
More precisely, the following holds:


\begin{corollary} Every non-atomic hyperbolic probability measure $\mu$ \ (i.e. $\chi(\mu)>0$), on $\widehat\La$,  is  the $(z_\infty)_*$ image of a probability $\varsigma$-invariant measure $\nu$ on $\Sigma^d$, assumed \eqref{shrink},  $\sT$ has no self-intersections and else $\mu$-a.e. local backward invariance of $U$,.
In particular, $\nu$ exists for every RB-domain which is completely (i.e. backward) invariant. 
\end{corollary}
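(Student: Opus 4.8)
The plan is to deduce the Corollary from Theorem \ref{access} by a standard disintegration/measurable-selection argument, so that the work reduces to producing, for $\mu$-a.e.\ point $q\in\widehat\La$, a point $\a\in\Sigma^d$ with $z_\infty(\a)=q$ in a way that depends measurably on $q$. First I would observe that the hypotheses---$\sT$ has no self-intersections, the uniform shrinking \eqref{shrink}, and $\mu$-a.e.\ local backward invariance of $U$---are precisely what is needed so that Theorem \ref{access} applies at $\mu$-a.e.\ $q$: indeed $\chi(\mu)>0$ and the Birkhoff ergodic theorem (applied to an ergodic decomposition of $\mu$, or directly using $\underline\chi(q)>0$ a.e.\ which holds since $\mu$ is hyperbolic) give $\underline\chi(q)>0$ for $\mu$-a.e.\ $q$, and together with the local backward invariance assumption this makes $q$ \emph{good} in the sense of \cite[Definition 2.5]{Prz:94}. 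Hence for $\mu$-a.e.\ $q$ there is at least one convergent branch $b(\a)$ with $z_\infty(\a)=q$.

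Next I would upgrade ``there exists $\a$'' to ``there is a measurable choice of $\a$.'' The set $G=\{(\a,q): b(\a)\text{ convergent},\ z_\infty(\a)=q\}\subset\Sigma^d\times\widehat\La$ is Borel (the convergence of $b(\a)$ is a countable intersection of open conditions because of \eqref{shrink}, and $z_\infty$ is Borel on its domain), and its projection to the $\widehat\La$-coordinate contains a set of full $\mu$-measure by the previous paragraph. By the Jankov--von Neumann / Lusin measurable-selection theorem there is a universally measurable section $s:\widehat\La\to\Sigma^d$ with $(s(q),q)\in G$ for $\mu$-a.e.\ $q$; modifying on a null set we may take $s$ Borel. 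Set $\nu_0:=s_*\mu$ on $\Sigma^d$. Then $(z_\infty)_*\nu_0=\mu$ by construction. The remaining issue is invariance: $\nu_0$ need not be $\varsigma$-invariant, but $z_\infty\circ\varsigma=f\circ z_\infty$ on $\sD$, so $(z_\infty)_*(\varsigma_*\nu_0)=f_*\mu=\mu$ as well. To get a genuinely invariant $\nu$ I would pass to a Ces\`aro limit $\nu:=\lim_k \frac1{n_k}\sum_{j=0}^{n_k-1}\varsigma^j_*\nu_0$ along a weak$^*$-convergent subsequence; this $\nu$ is $\varsigma$-invariant, and since $z_\infty$ is continuous on the shift-invariant Borel set $\sD$ (which carries full measure for every $\varsigma^j_*\nu_0$), the pushforward is preserved in the limit, giving $(z_\infty)_*\nu=\mu$. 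That $\mu$ is non-atomic is only used to know $\chi(\mu)$ is well defined and positive on the non-atomic part; one could equally phrase it as: atoms of $\mu$ must sit at (eventually) periodic points, handled directly.

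The last clause---that $\nu$ exists for every RB-domain which is completely backward invariant---follows because complete backward invariance of $U$ trivially implies the $\mu$-a.e.\ local backward invariance hypothesis (the traces $D_{n,0}$ automatically lie in $U$), and for RB-domains with $g(z)=z^d$ as in Section \ref{sec:bound} the tree can be taken with no self-intersections and \eqref{shrink} holds by the expanding property of $g$ near $\partial\D$ together with the boundary behaviour \eqref{radial1}; alternatively one invokes Theorem \ref{access} directly in that setting as already indicated there.

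The main obstacle I expect is the measurable-selection step done \emph{compatibly with dynamics}: one wants not just a measurable section but ideally an equivariant one, $s\circ f=\varsigma\circ s$, which would give $\nu=s_*\mu$ invariant immediately and avoid the Ces\`aro-limit fudge. Whether an equivariant Borel section exists is delicate (the fibers $z_\infty^{-1}(q)$ can be genuinely multi-valued even though ``thin''-to-one by \cite{PSkrzy}), so the honest route is the averaging argument above, whose only real content is checking that weak$^*$-limits do not lose mass off $\sD$---and that is guaranteed by \eqref{shrink}, which forces $\sD$ to be a $G_\delta$ of full measure for all the measures in play and keeps $z_\infty$ continuous there.
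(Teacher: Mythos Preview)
Your approach diverges from the paper's, and the Ces\`aro-averaging step contains a genuine gap. You want to pass from $\nu_0=s_*\mu$ to a $\varsigma$-invariant weak$^*$ limit $\nu$ and claim $(z_\infty)_*\nu=\mu$. But $z_\infty$ is only defined on $\sD\subsetneq\Sigma^d$, and condition~\eqref{shrink} gives $\diam\gamma_n(\a)\to 0$ uniformly, not summably; it does \emph{not} force $\sD=\Sigma^d$, nor does it give a continuous extension of $z_\infty$ to all of $\Sigma^d$. Without that, pushforward by $z_\infty$ is not weak$^*$-continuous: even if every $\varsigma^j_*\nu_0$ lives on $\sD$, the limit $\nu$ can place mass on $\Sigma^d\setminus\sD$, and for a continuous test function $\phi$ on $\widehat\La$ the function $\phi\circ z_\infty$ is not continuous on $\Sigma^d$, so $\int\phi\circ z_\infty\,d\nu_{n_k}\to\int\phi\circ z_\infty\,d\nu$ is unjustified. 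Your last paragraph acknowledges this obstacle but the appeal to \eqref{shrink} does not resolve it.

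The paper sidesteps this entirely, and in doing so actually \emph{uses} the two hypotheses you leave idle. The assumption that $\sT$ has no self-intersections is a planarity statement: it forces the set of points in $\La$ that are limits of three or more distinct infinite branches to be countable. Since $\mu$ is non-atomic (this is the real reason that hypothesis is there, not the one you gave), $\mu$-a.e.\ $q$ has $\#\,z_\infty^{-1}(q)\le 2$. One then lifts $\mu$ fiberwise: Dirac on singletons, and the uniform $\tfrac12$--$\tfrac12$ split on two-point fibers. Because the sets $A_1,A_2$ of points with one, respectively two, preimages are $f$-invariant up to null sets, and the half--half distribution is symmetric, the resulting $\nu$ is $\varsigma$-invariant \emph{directly}, with no limiting procedure and hence no continuity issue. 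Your route could be repaired by replacing the time average with this fiber average, but then it becomes the paper's argument.
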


 \begin{proof} (the lifting part missing in \cite{Prz:94} and \cite{Prz:06}). By Theorem \ref{access} $\mu$ is supported on $\Lambda$ i.e. on $z_\infty(\sD(z_\infty))$. The lift of $\mu$ to $\mu'$ on the pre-image $\cB'$ under $z_\infty$ of the Borel $\sigma$-algebra of subsets of $\La$ can be extended to a $\varsigma$-invariant $\nu$ on $\cB$ the Borel $\sigma$-algebra of the subsets of $\Sigma^d$ by using the fact
that the set of at least triple points (limit points of at least three infinite branches of $\sT$) is countable,
hence $z_\infty^{-1}(x)$ of $\mu$-a.e $x$ contains at most 2 points. More precisely, let $A_1$ be the set
of points having one $z_\infty$-preimage, $A_2$ two preimages.
They are both $f$-invariant (except measure 0), so are their $z_\infty$-preimages $A'_1$ and $A'_2$ under $\varsigma$.
We extend  $\mu'$ by distributing conditional measures on the two points preimages of points in $A_2$ half-half and Dirac on one point preimages.

\end{proof}

This allows to conclude Theorem \ref{LIL-refined-HD} (a part relying on CLT) and Theorem \ref{tree-HD}
for equilibrium states for rational maps and H\"older
potentials on $J(f)$ by lifting $\mu_\phi$ to $\Sigma^d$ as in \cite{Prz:06}.
However, this
seems useless since the proof of CLT in \cite{Prz:06} is done directly on $J(f)$ (seemingly also for LIL, for which one should however refer to the proofs in \cite{PUZ:89}) and there are direct proofs of LIL in  \cite{RivLi2} and \cite{SzoUrbZdu:15}.

\subsection*{Acknowledgments}
Thanks to H. Hedenmalm, O. Ivrii, J. Rivera-Letelier, M. Sabok, M. Urba\'nski, and A. Zdunik  for   comments and corrections.

\bibliographystyle{amsplain}

\end{document}